\newtheorem{defi}{Definition}[section]
\newtheorem{theorem}[defi]{Theorem}
\newtheorem{lemma}[defi]{Lemma}
\newtheorem{corollary}[defi]{Corollary}
\newtheorem{proposition}[defi]{Proposition}
\newtheorem{remark}[defi]{Remark}
\def\N{\mathbb{N}}
\def\R{\mathbb{R}}
\def\B{\mathbb{B}}
\title{Sign-changing radial solutions for the Schr\"odinger-Poisson-Slater problem.}
\author{Isabella Ianni}
\address{Department of Mathematics, SUN - Seconda Universita di Napoli\\ viale Lincoln 5, Caserta, Italy}
\email{isabella.ianni@unina2.it}
\keywords{Schr\"odinger-Poisson-Slater system, nodal solutions, parabolic problem, dynamical approach}
\begin{document}

\maketitle

\begin{abstract}
We consider the Schr\"odinger-Poisson-Slater (SPS) system in $\R^3$  and  a nonlocal SPS type equation in balls of $\mathbb R^3$ with Dirichlet boundary conditions.
We show that for every $k\in\mathbb N$ each problem considered admits a nodal radially symmetric solution
which  changes sign exacly $k$ times in the radial variable.

Moreover when the domain is the ball of $\mathbb R^3$ we obtain the existence of radial global solutions for the associated nonlocal parabolic problem  having $k+1$ nodal regions at every time. 
\end{abstract}

\section{Introduction}
We consider the Schr\"odinger-Poisson-Slater (SPS) problem in $\mathbb R^3$
\begin{equation}\label{sistemaSPSInRN}\left\{\begin{array}{lr}-\Delta u+u+\phi u -|u|^{q-1}u=0 \quad\mbox{ in }\mathbb R^3\\-\Delta\phi=u^2\qquad\qquad\quad\qquad\qquad\mbox{ in }\mathbb R^3\\\lim_{|x|\rightarrow +\infty}\phi(x)=0\end{array}\right.
\end{equation}

From a physical point of view systems  like \eqref{sistemaSPSInRN} appear is
semiconductor theory to model the evolution of an electron ensemble
in a semiconductor crystal (see \cite{Markovich, Mauser, 
BokanowsiLopezSoler, SanchezSoler}).
In this context the Poisson
potential $\phi$ comes from the repulsive interactions among
electrons while the nonlinear term
$|u|^{p-1}u$ is introduced as a correction to the
repulsive Poisson potential to explain different phenomena observed
from experimentations (for instance in simulations with
superlattices structures). In particular in the case $p=\frac{5}{3}$
this term is usually known as \emph{Slater correction}, since it
comes from a term that was first introduced by Slater (1951) as a
local approximation for the exchange term in the Hartree-Fock
equations (see \cite{Slater}, \cite{Dirac}). For this reasons we refer to system \eqref{sistemaSPSInRN}
as Schr\"odinger-Poisson-Slater system (SPS).

A different justification of system \eqref{sistemaSPSInRN}
 can be found also in \cite{BenciFortunato} where
it is proposed as a model formally describing the interaction of a
charged particle with its own electrostatic field. 


$\;$\\ 
From a mathematical point of view system \eqref{sistemaSPSInRN} shows several difficulties, the 
nonlinear nature of it being due both  to the power-type nonlinearity in the first equation and to the
coupling, and has been object of many investigation in the last years (we recall among others the papers \cite{AmbrRuizMult, AzzPom, BenciFortunato, coclite, DapMugSol, DapMugNonEx, davenia, kikuchi, RuizJFA}, see also \cite{IanRui}).
\\

As shown by recent results 
the structure of the solution set of \eqref{sistemaSPSInRN} depends strongly on the value of $q$ of the power-type nonlinearity. 

For $q\leq 2$ and $q\geq 5$ system \eqref{sistemaSPSInRN} doesn't admit any nontrivial solution (see \cite{DapMugNonEx, kikuchi, RuizJFA}), while when $q\in (2,5)$ existence and multiplicity results have been proved using variational techniques.


Precisely  in  \cite{DapMugSol,  kikuchi, RuizJFA} the existence of at least one nontrivial radial solution is proved while in  \cite{AmbrRuizMult} they show the existence of infinitely many radial solutions.

For completeness we recall that mostly these existence results are obtained through min-max procedures, and one needs to restrict the energy functional to the natural constrained of the radial functions to overcome the problem of the lack of compactness of the Sobolev embeddings in the unbounded domain $\mathbb R^3.$
We also point out that in \cite{AmbrRuizMult} no information about the sign of the solutions is given.
On the other hand in \cite{AzzPom} the existence of a positive ground state solution has been proved but it is still an open problem whether it is radial or not.
$$$$
In the present paper we analyze more deeply the structure of the radial bound states set for problem \eqref{sistemaSPSInRN}.

We show the existence of infinitely many radially symmetric sign-changing solutions which are distinguished by the number of nodal regions, more precisely we prove the existence of radial solutions which have a prescribed number of nodal domains.

Our proof combines a dynamical approach together with a limit procedure and it is mainly inspired by \cite{WeiWeth}. Up to our knowledge this is the first time that such a different approach (not variational) is used in the context of the Schr\"odinger-Poisson-Slater problems.\\

Our main result is the following
 
\begin{theorem}\label{teoremaEsistenzaInRN}
Let $q\in [3, 5)$. For every integer $k\geq 2$, \eqref{sistemaSPSInRN} admits a  couple of  radial solutions $(\pm u,\phi)$ such that $\pm u$ changes sign precisely $k-1$ times in the radial variable.
%
%
\end{theorem}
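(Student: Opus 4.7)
The plan is to combine a parabolic/dynamical approach on balls with a diagonal limit, in the spirit of \cite{WeiWeth}. I would first prove an analogous statement for the Dirichlet problem on a large ball $B_R\subset\mathbb R^3$ and then let $R\to\infty$.

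\textbf{Step 1: reduction to a single nonlocal equation on $B_R$.} On $B_R$ the Poisson problem $-\Delta\phi=u^2$ with $\phi|_{\partial B_R}=0$ is uniquely solvable by $\phi=\phi_u$ depending continuously on $u$, so the SPS system becomes
\begin{equation*}
-\Delta u+u+\phi_u u=|u|^{q-1}u\quad\text{in }B_R,\qquad u=0\text{ on }\partial B_R,
\end{equation*}
which is variational with energy
\begin{equation*}
E_R(u)=\frac12\int_{B_R}(|\nabla u|^2+u^2)+\frac14\int_{B_R}\phi_u\,u^2-\frac{1}{q+1}\int_{B_R}|u|^{q+1}.
\end{equation*}

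\textbf{Step 2: dynamical construction of nodal solutions on $B_R$.} For radial data I would study the nonlocal parabolic flow
\begin{equation*}
u_t-\Delta u+u+\phi_u u-|u|^{q-1}u=0
\end{equation*}
together with, for each fixed $k\geq 2$, the open set $M_{k,R}$ of radial $H^1_0(B_R)$ functions having exactly $k-1$ sign changes such that each nodal component lies on the Nehari manifold of the restricted nonlocal problem. The crucial point is that in the radial setting the number of sign changes is non-increasing along the flow (a Sturm-type monotonicity), so $M_{k,R}$ is essentially flow-invariant. A constrained minimization of $E_R$ on $M_{k,R}$, with the deformation provided by the parabolic flow, then produces a critical point $u_R\in M_{k,R}$, i.e. a radial solution of the nonlocal equation with exactly $k-1$ sign changes. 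The assumption $q\geq 3$ is expected to make the Nehari fibration in each nodal component well posed: the nonlocal term $\int\phi_u u^2$ is $4$-homogeneous while $|u|^{q+1}$ has exponent $q+1\geq 4$, ensuring a unique projection onto each Nehari component.

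\textbf{Step 3: limit $R\to\infty$.} This is the main obstacle. One needs a priori bounds on $u_R$ uniform in $R$: (a) a uniform $H^1(\mathbb R^3)$ bound, (b) a uniform upper bound on the location of the outermost node so that nodal regions do not escape to infinity, and (c) a uniform lower bound on the $L^{q+1}$-mass of each nodal component so no nodal region collapses. Given such bounds, the compact radial embedding $H^1_{\mathrm{rad}}(\mathbb R^3)\hookrightarrow L^{q+1}(\mathbb R^3)$ for $q+1\in(2,6)$ extracts a subsequence $u_R\rightharpoonup u$ solving the nonlocal equation on $\mathbb R^3$, and the uniform node control transfers the property of having exactly $k-1$ sign changes to the limit; then $(\pm u,\phi_u)$ solves \eqref{sistemaSPSInRN}. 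The hard part is (b)--(c), which requires a delicate interplay between the attractive nonlinearity $|u|^{q-1}u$ and the repulsive nonlocal term $\phi_u u$; the lower bound $q\geq 3$ should enter decisively here, both to keep the energy levels of the nodal minimizers bounded uniformly in $R$ and to prevent the nonlocal term from destabilizing the nodal structure as $R\to\infty$.
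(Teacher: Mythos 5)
Your overall architecture (solve a nonlocal problem on $\B_R$ via the parabolic flow, then let $R\to\infty$ with uniform bounds) is the paper's strategy, and your list of obstacles in Step 3 is essentially the right one. But Step 2 contains a genuine gap. You propose to produce the nodal solution on the ball by minimizing $E_R$ over the set $M_{k,R}$ of radial functions with $k-1$ sign changes whose nodal components each lie ``on the Nehari manifold of the restricted nonlocal problem.'' For the SPS functional this construction is not well posed: the nonlocal term does not decouple over nodal components, since for $u=\sum_i u_i$ with $u_i$ supported on disjoint annuli one has the cross terms $\int\!\!\int \frac{u_i^2(x)u_j^2(y)}{|x-y|}\,dx\,dy>0$ for $i\neq j$. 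Hence $E_R(u)\neq\sum_i E_R(u_i)$, the ``Nehari constraint of a single component'' depends on all the other components, and the projection onto the nodal Nehari set is a coupled system of $k$ equations whose solvability and uniqueness you have not addressed (your homogeneity remark only treats the one-component fiber map, and even there $q=3$ is borderline: the $4$-homogeneous nonlocal term and $|u|^4$ compete, which is exactly why the paper imposes the smallness condition \eqref{quguale3} on its test functions). This decoupling failure is precisely the known obstruction to Nehari-type nodal constructions for Schr\"odinger--Poisson, and it is the reason the paper avoids them: instead it runs the flow on $\partial\mathcal A_*$ (the boundary of the domain of attraction of $0$), uses the sets $\mathcal A_k$ of data with \emph{at most} $k-1$ sign changes together with a Krasnoselskii genus count to find an orbit that can never drop to $k-2$ sign changes. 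Note also that your set $M_{k,R}$ of functions with \emph{exactly} $k-1$ sign changes is not flow-invariant (Lemma \ref{NumRegioniNonCresceLungoTraiettoria} only gives monotone non-increase, so sign changes can be lost along the flow); the genus argument exists exactly to rule out that collapse, and ``essentially flow-invariant'' cannot substitute for it.

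Two further points. First, the uniform-in-$R$ energy bound that drives your Step 3(a) is asserted but not constructed; the paper obtains it by evaluating $E$ on a fixed $k$-dimensional space $W_k$ of functions supported in $\B_1$ (Lemma \ref{lemmaWminore}), so that the bound $E(u_R)\le C_k$ is manifestly independent of $R$, and then the $H^1$ bound follows from $E(u)-\tfrac14 E'(u)u\ge\tfrac14\|u\|^2$, which is where $q\ge 3$ actually enters (Proposition \ref{UniformHBound}). By contrast, the node control in Step 3(b) does not hinge on $q\ge3$ as you suggest: it comes from the pointwise observation that at any positive interior maximum or negative interior minimum $|u_n(\bar r)|^{q-1}\ge 1+\phi_{u_n}(\bar r)\ge1$ (Lemma \ref{LemmaMinMaxLocali}), combined with Strauss's radial decay lemma (Lemma \ref{LemmaNonScappanoZeriAdInfinito}); and the transfer of the exact count to the limit needs $C^2_{loc}$ convergence from uniform elliptic estimates plus Hopf's lemma for nondegeneracy of the zeros, not just weak $H^1$ convergence. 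Second, a smaller discrepancy: the paper's ball problem \eqref{problemaDiDirichletNellaPalla} uses the Newtonian potential $\phi_u(x)=\int_{\B_R}\frac{u^2(y)}{|x-y|}\,dy$ (which does \emph{not} vanish on $\partial\B_R$), not the Dirichlet Green's function of the ball as in your Step 1; your choice is not fatal but would force you to re-derive the limit of the kernel as $R\to\infty$, whereas the paper's choice makes $E_R$ literally the restriction of the whole-space functional.
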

$$$$
In order to obtain this result we will study first the existence of sign changing radial solutions for the following semilinear elliptic equation 
with Dirichlet boundary condition \begin{equation}\label{problemaDiDirichletNellaPalla}\left\{\begin{array}{lr}-\Delta u+u+u\int_{B_R}\frac{u^2(y)}{|x-y|}dy-|u|^{q-1}u=0 \quad\mbox{ in }\B_R\\u=0 \quad\mbox{ on }\partial \B_R\end{array}\right.
\end{equation}
where $\B_R$ is the ball of radius $R$ in $\R^3.$ This problem, with both  local and nonlocal nonlinearities, has been also investigated in \cite{Ruiz} when $q\in (1,2)$.

We recall  that  solutions of \eqref{problemaDiDirichletNellaPalla} are critical points of the energy functional $E:H^1_{0}(\B_R)\rightarrow \mathbb R$
given by
$$E(u)=\frac{1}{2}\int|\nabla u(x) |^2dx+\frac{1}{2}\int u(x)^2dx+\frac{1}{4}\int\int\frac{u^2(x)u^2(y)}{|x-y|} dxdy-\frac{1}{q+1}\int |u|^{q+1}(x) dx$$

$$$$

Our second main result is then the analogus of Theorem \ref{teoremaEsistenzaInRN} for problem \eqref{problemaDiDirichletNellaPalla}:
\begin{theorem}\label{teoremaEsistenzaNellaPalla}
Let $q\in [3, 5)$. For every $R\geq 1$ and every integer $k\geq 2$, \eqref{problemaDiDirichletNellaPalla} admits a couple of  radial solutions $\pm u$ changing sign precisely $k-1$ times in the radial variable. Moreover 
there exists a constant $C_k>0,$ independend of $R,$ such that $E(\pm u)\leq C_k$. 
\end{theorem}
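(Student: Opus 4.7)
The plan is to adapt the parabolic/dynamical approach of Wei--Weth to the nonlocal setting. Consider the heat-type flow
\begin{equation*}
u_t=\Delta u-u-u\phi_u+|u|^{q-1}u,\qquad u|_{\partial\B_R}=0,
\end{equation*}
where $\phi_u(x):=\int_{\B_R}u^2(y)|x-y|^{-1}\,dy$, restricted to the invariant subspace $H^1_{0,\mathrm{rad}}(\B_R)$. Since $q<5$ is subcritical and $\phi_u$ enters as a nonnegative smooth lower-order coefficient, local well-posedness follows from standard semigroup theory and $E$ strictly decreases along non-stationary orbits. The compact embedding $H^1_{0,\mathrm{rad}}(\B_R)\hookrightarrow L^{q+1}(\B_R)$ ensures that bounded orbits are relatively compact, so $\omega$-limits consist of critical points of $E$.

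Next I would set up a min-max scheme parametrized by the simplex of nodal radii $\Sigma_k=\{0=r_0<r_1<\dots<r_{k-1}<r_k=R\}$. To each $\mathbf{r}\in\Sigma_k$ I associate a radial profile $v_{\mathbf{r}}$ obtained by gluing fixed-shape bumps with alternating signs on the annuli $\{r_{i-1}<|x|<r_i\}$, and define $c_k:=\inf_{h\in\mathcal{F}_k}\max_{\mathbf{r}\in\Sigma_k}E(h(\mathbf{r}))$, where $\mathcal{F}_k$ is the class of continuous extensions $h:\Sigma_k\to H^1_{0,\mathrm{rad}}(\B_R)$ that coincide with $\mathbf{r}\mapsto v_{\mathbf{r}}$ on $\partial\Sigma_k$. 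The flow together with a standard deformation lemma produces a critical point $u$ at the level $c_k$. A linking argument against the energy sublevel of $(k-2)$-node configurations, combined with a zero-counting argument showing that the flow does not increase the number of radial sign changes, then forces $u$ to have exactly $k-1$ sign changes in $(0,R)$; taking $\pm u$ yields the stated pair.

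The delicate new point is the uniform estimate $E(\pm u)\leq C_k$ with $C_k$ independent of $R\geq 1$. I would arrange this by placing the bumps that build $v_{\mathbf{r}}$ on annuli of fixed shape contained in a ball $\B_{R_0(k)}$ independent of $R$: the local terms of $E$ are then trivially $R$-independent, and the nonlocal term $\iint u^2(x)u^2(y)|x-y|^{-1}\,dx\,dy$ is controlled because $u^2$ is supported in a fixed compact set and $|x-y|^{-1}$ is locally integrable in $\R^3$. This gives $c_k\leq C_k$ uniformly in $R$, which in turn will be decisive for passing from Theorem~\ref{teoremaEsistenzaNellaPalla} to Theorem~\ref{teoremaEsistenzaInRN} by letting $R\to\infty$.

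The main obstacle I foresee is twofold. First, propagating the $R$-independent control of $c_k$ from $\partial\Sigma_k$ to the whole simplex: when some $r_i$'s cluster or approach $R$ along an interpolating path, the bumps must be cut off and rescaled without blowing up the energy, and the uniform integrability of the Newtonian kernel is crucial. Second, establishing nodal-preservation for the nonlocal flow: $\phi_{u(t)}$ is a time-dependent potential, but since it is smooth, radial, and nonnegative, the radial parabolic equation satisfied by $u(t,r)$ retains the structure needed for a Sturmian/strong-maximum-principle argument, so that interior nodes can neither be created nor destroyed along orbits. Once these two points are in place the proof follows the template described above.
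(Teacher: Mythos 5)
Your overall framework --- the nonlocal parabolic flow, the non-increase of the number of radial sign changes along orbits, and the idea of building test configurations out of bumps supported in a ball of radius independent of $R$ so that the energy bound is $R$-uniform --- matches the paper. The gap is in the selection mechanism. Your min-max over the simplex $\Sigma_k$ of nodal radii, with a ``linking argument against the energy sublevel of $(k-2)$-node configurations'', does not, as described, force the critical point at level $c_k$ to change sign exactly $k-1$ times: the set of radial functions with at most $k-2$ sign changes is not an energy sublevel set, and you have not established any intersection property between $h(\Sigma_k)$ and its complement. For the local equation such schemes can be salvaged because the energy decouples over the nodal annuli (each nodal piece is projected onto the Nehari set of its annulus and the levels are compared), but here the Coulomb term $\int\!\!\int u^2(x)u^2(y)|x-y|^{-1}dxdy$ couples all nodal components, so the needed comparison between $c_k$ and lower-node levels has no obvious proof; this coupling is precisely why a nodal-Nehari/min-max route is problematic for this system. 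A further unaddressed point is the role of $q\ge 3$: both the global solvability of the flow and the boundedness from above of $E$ on the full finite-dimensional space of bump configurations (where the quartic nonlocal term competes with $-|t|^{q+1}/(q+1)$, requiring an extra smallness condition when $q=3$) depend on it.

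What the paper does instead is the Wei--Weth construction: since $0$ is an asymptotically stable equilibrium of the flow, one works on the relative boundary $\partial\mathcal A_*$ of its domain of attraction, which is positively invariant and on which $E\ge 0$, so that orbits starting there are global with nonempty $\omega$-limit sets consisting of solutions of the elliptic problem. The oddness of the nonlinearity makes all relevant sets symmetric and lets the Krasnoselskii genus do the counting: the boundary (in a $k$-dimensional space $W_k$ spanned by $k$ fixed disjoint bumps supported in $\B_1$, whence $E\le C_k$ on $W_k$ uniformly in $R$) of $\mathcal A_*\cap W_k$ has genus $k$ and consists of data with at most $k-1$ sign changes, while the set of data whose time-$t$ image has at most $k-2$ sign changes has genus at most $k-1$; the difference of genera yields initial data $\bar u$ whose whole orbit, and hence whose $\omega$-limit, retains exactly $k-1$ sign changes, and $E(\omega(\bar u))\le E(\bar u)\le C_k$ since the energy decreases along the flow. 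To complete your argument you would either have to supply the missing level comparison across nodal configurations despite the nonlocal coupling, or replace the linking step by this genus argument on $\partial\mathcal A_*$.
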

$$$$
The proof of theorem \ref{teoremaEsistenzaNellaPalla} relies on a dynamical method. We find solution of the elliptic problem \eqref{problemaDiDirichletNellaPalla} looking for equilibria in the $\omega$-limit set of trajectories of the associated parabolic problem, namely of

\begin{equation}\label{parabolicProblem}\left\{\begin{array}{lr}u_t-\Delta u+u+u\int_{\B_R}\frac{u^2(y)}{|x-y|}dy-|u|^{q-1}u=0 \quad\mbox{ in }\B_R\times [0,\infty)\\u=0 \quad\mbox{ on }\partial \B_R\times [0,\infty)\\
u(\cdot,0)=u_0\mbox{ on }\B_R\end{array}\right..\end{equation} 
$$$$
This nonlocal  initial boundary value parabolic  problem has been studied in  \cite{IanniPreprintParabolic} by the author himself.
$$$$

Solutions of elliptic equations via the corresponding parabolic flow  can be found in the literature (we recall among others  \cite{ CazenaveLions, Quittner, ContiMerizziTerracini}),
in particular here we follow an approach introduced in \cite{WeiWeth} in the contest of symmetric systems of two coupled Schr\"odinger equations but which can be applied,with proper modifications, also to scalar equations with odd nonlinearities, like our problem \eqref{problemaDiDirichletNellaPalla}.

This method consists in selecting special initial data on the boundary of the domain of attraction of an asymptotically stable equilibrium in order to obtain equilibria with a fixed number of changes of sign.

It relies on a crucial monotonicity property of the semilinear parabolic problem \eqref{parabolicProblem} stating that the number of zeros is not increasing along the flow 
and combines the study of the parabolic flow with a topological argument based on the use of the Krasnonelskii genus.

  We underline that, for fixed $k$, the solutions found in Theorem \ref{teoremaEsistenzaNellaPalla} satisfy an energy bound independent on the radius $R$ of the domain. This  is the starting point to prove Theorem \ref{teoremaEsistenzaInRN}, throught a limit procedure on the radius of the domain.\\

Moreover, as a byproduct in the proof of Theorem \ref{teoremaEsistenzaNellaPalla} we obtain also the following result related to the existence of sign-changing global solutions for the parabolic problem  \eqref{parabolicProblem} that we believe to be of independent interest
\begin{theorem}\label{teoParabolico}
Let $q\in [3,5).$ For any integer $k\geq 2$ there exists a couple  $\pm u_k:\mathbb B_R\times[0,\infty)\rightarrow \mathbb R$ of global radial solutions of 
\eqref{parabolicProblem} such that, for all $t\geq 0$ $\pm u_k(\cdot, t)$ has exactly $k-1$ changes of sign in the radial variable.
\end{theorem}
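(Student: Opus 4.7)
The plan is to adapt the dynamical-topological scheme of Wei-Weth, relying on the parabolic theory for \eqref{parabolicProblem} established in \cite{IanniPreprintParabolic}. From that reference I take, without further justification: local well-posedness of the radial flow on $X := H^1_{0,\mathrm{rad}}(\B_R)$ with maximal existence time $T_{\max}(u_0)\in(0,\infty]$, the $L^\infty$ blow-up alternative, strict decrease of the energy $E$ along non-stationary orbits, continuous dependence on initial data, and---crucially---a Sturm-type intersection-comparison principle stating that the number of sign changes of $r\mapsto u(r,t)$ is non-increasing in $t$.

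\textbf{Step 1 (stability of $0$ and basin of attraction).} Coercivity of the quadratic part of $E$ near $0$ shows that for $\|u_0\|_X$ small the trajectory is global and converges to $0$. Hence the basin $\mathcal{A}:=\{u_0\in X:T_{\max}=\infty,\ u(\cdot,t)\to 0\}$ is an open, $\mathbb{Z}_2$-symmetric neighborhood of $0$ contained in $\{E\geq 0\}$; in particular every $u_0$ with $E(u_0)<0$ lies in $\mathcal{A}^c$.

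\textbf{Step 2 (genus-$k$ family of nodal initial data).} For each $k\geq 2$ I build an odd continuous map $\Phi_k\colon S^{k-1}\to X$ whose image consists of radial functions with exactly $k-1$ sign changes and negative energy. Fix a partition $0=\rho_0<\dots<\rho_k=R$ and positive radial bumps $\psi_i$ supported in $\{\rho_{i-1}<|x|<\rho_i\}$, and set
\begin{equation*}
\Phi_k(a) := \lambda(a)\sum_{i=1}^{k}\operatorname{sgn}(a_i)\,|a_i|\,\psi_i,\qquad a=(a_1,\ldots,a_k)\in S^{k-1},
\end{equation*}
with $\lambda(a)>0$ large enough so that $E(\Phi_k(a))<0$; for $q\geq 3$, a mild concentration of the $\psi_i$ ensures that the power term dominates the $H^1$ part and the Hartree term at large amplitude. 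By oddness and continuity, the Krasnoselskii genus satisfies $\gamma(\Phi_k(S^{k-1}))\geq k$.

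\textbf{Step 3 (global existence and nodal preservation).} Each trajectory from $\Phi_k(S^{k-1})$ satisfies $E(u(\cdot,t))\leq E(u_0)<0$; combined with the non-negativity of the Hartree term and a Gagliardo-Nirenberg interpolation ($q<5$), this yields a uniform $H^1$ bound along the flow, excluding blow-up and forcing $T_{\max}=\infty$. The Sturm-type monotonicity then gives at most $k-1$ sign changes for $u(\cdot,t;\Phi_k(a))$ at every $t\geq 0$. To secure \emph{exactly} $k-1$ sign changes for some $a=a_*$, argue by contradiction: if for every $a\in S^{k-1}$ there existed a time at which the count strictly dropped, a continuity-plus-compactness argument would produce, at a suitable uniform time, an odd continuous map $S^{k-1}\to\{u\in X:\text{at most }k-2\text{ sign changes},\ E(u)\leq 0\}$; the target, being a union of finitely many open nodal strata intersected with the negative sublevel, can be shown to have Krasnoselskii genus at most $k-1$, contradicting $\gamma(\Phi_k(S^{k-1}))\geq k$. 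Setting $u_k(\cdot,t):=u(\cdot,t;\Phi_k(a_*))$ and using the $\mathbb{Z}_2$-invariance of \eqref{parabolicProblem} to produce $-u_k$ completes the proof.

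\textbf{Main obstacle.} The most delicate point is the genus-based selection in Step 3: controlling the possible loss of sign-change count along the flow and bounding the genus of the stratum of radial functions with strictly fewer sign changes. This amounts to establishing sufficient continuity properties of the nodal count under the parabolic flow (transversal zeros being preserved by continuous dependence, non-transversal zeros being ruled out by the Sturm monotonicity applied backwards in time along convergent subsequences), together with an explicit inspection of the stratification geometry. A secondary difficulty is the a priori $H^1$ bound needed to exclude blow-up, where the focusing power $-|u|^{q-1}u$ and the defocusing Hartree term must be balanced using the subcritical exponent $q<5$; this is precisely where the parabolic analysis of \cite{IanniPreprintParabolic} enters in an essential way.
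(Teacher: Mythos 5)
Your Step 3 contains a fatal error that undermines the whole construction. You choose initial data with $E(\Phi_k(a))<0$ and then claim that $E(u(\cdot,t))\le E(u_0)<0$ ``yields a uniform $H^1$ bound along the flow, excluding blow-up.'' It does not. The energy here is
\begin{equation*}
E(u)=\tfrac12\|u\|^2+\tfrac14\int\!\!\int\frac{u^2(x)u^2(y)}{|x-y|}\,dx\,dy-\tfrac1{q+1}\|u\|_{q+1}^{q+1},
\end{equation*}
and an \emph{upper} bound on $E$ gives $\tfrac12\|u\|^2\le \tfrac1{q+1}\|u\|_{q+1}^{q+1}+E(u_0)$, which controls nothing: the focusing power term is superquadratic and the defocusing Hartree term (only quartic) does not dominate it for $q\ge 3$ in general --- indeed the paper arranges condition \eqref{quguale3} precisely so that $E\to-\infty$ on $W_k$. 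The global existence criterion available for \eqref{parabolicProblem} (quoted from \cite{IanniPreprintParabolic}) requires the energy to be bounded \emph{from below} along the trajectory; for negative-energy data the energy keeps decreasing and such orbits are exactly the candidates for finite-time blow-up (Levine-type concavity arguments for the focusing power). So your trajectories are not known to be global, and the nodal-count argument that follows has nothing to act on.

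The paper avoids this by selecting initial data not in the negative-energy region (which lies in the complement of the closure of the basin of attraction of $0$) but on the \emph{relative boundary} $\partial\mathcal A_*$ of that basin. On $\mathcal A_*$ one has $E\ge 0$ (since $E$ decreases to $E(0)=0$ along orbits converging to $0$), hence by continuity $E\ge 0$ on $\partial\mathcal A_*$ as well; the ``energy bounded below'' criterion then gives global existence and precompact orbits there. The genus argument is run on $\partial\mathcal O$ with $\mathcal O=\mathcal A_*\cap W_k$ (compact, genus $k$, contained in $\mathcal A_k$), against the sets $\mathcal C^t_{k-1}$ of points flowed into $\mathcal A_{k-1}$ (genus $\le k-1$), and the desired trajectory is obtained as a limit $\bar u$ of points $u_n\in\partial\mathcal O\setminus\mathcal C^n_{k-1}$. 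Your finite-dimensional odd family and the genus count $\ge k$ in Step 2 are in the right spirit, but the family must be intersected with the basin of attraction and restricted to its boundary; your secondary worry about the genus of the stratum of functions with fewer sign changes is also handled in the paper only because one works inside $\partial\mathcal A_*$, where the Wei--Weth estimate $\gamma(\mathcal A_{k-1})\le k-1$ applies.
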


$$$$
We point out that all the results obtained in the present paper are  obtained for values $q\in [3,5)$, the case  $q\in (2,3)$ being still opened.  

This is due to the fact that first the existence of solutions for the parabolic problem \eqref{parabolicProblem} when $q\in (2,3)$ is still open (see \cite{IanniPreprintParabolic}), second the geomeric properties of the energy functional depend strongly on the value of $q$ (see the proof of Proposition \ref{UniformHBound} where we need to restrict to the case $q\geq 3$).
We recall also that it is still an open problem whether the (PS) property holds or not in the gap $q\in (2,3)$, precisely it has not yet been proved the existence of a bounded Palais-Smale sequence when $q$ belongs to this gap.

Our difficulties seem to be strictly related to the ones one find when dealing with this open problem. 
$$$$

We briefly describe the paper's organization. 

In Section \ref{Preliminari} we collect some notations and preliminaries.
 In Section \ref{sectionParabolic} we recall the properties of the nonlocal parabolic problem \eqref{parabolicProblem} (local and global existence results, regularity, compactness properties) which have been studied in \cite{IanniPreprintParabolic} by the author himself.
Moreover we prove the monotonicity of the number of zeros along the parabolic flow.

In Section \ref{SPazioFinitoDim} we define a family of finite dimensional spaces $(W_k)_{k\in\mathbb N}$ such that the restriction of the energy functional on it is unbounded from below and bounded from above uniformly on the radius $R$ of the domain.
The use of $W_k$ will be crucial for the proof of Theorem \ref{teoremaEsistenzaNellaPalla}, in particular to obtain uniform estimates of the energy.
 
Section \ref{sectionDimoTeoremiPalla} is therefore devoted to the proofs of  Theorems \ref{teoremaEsistenzaNellaPalla} and \ref{teoParabolico}.

Section \ref{sectionTeoInR3} contains the proof of Theorem \ref{teoremaEsistenzaInRN} which is obtained throught a limit procedure on the radius of the balls $\B_R.$ In particular we controll the number of zeros while passing to the limit using ODE techniques,  Strauss Lemma and maximum principles.\\\\\\

\section{Notations and preliminaries}\label{Preliminari}
Let us fix some notations. 

$\B_R:=\left\{x\in\R^3:|x|< R\right\}$ is the unit ball of $\mathbb R^3$ of radius $R$.

For an open $\Omega\subseteq \mathbb R^3,$ 
$(L^r(\Omega),\|\cdot\|_{L^3(\Omega)})$ is the usual Lebesgue space, we may also write the norm simply as $\|\cdot\|_r$ when there is no misunderstanding about the integration set.

$(W^{s,r}(\Omega),\|\cdot\|_{W^{s,r}(\Omega)})$ and $W^{s,r}_0(\Omega)$ are the usual Sobolev or Sobolev-Slobodeckii spaces
we may also write the norm simply as $\|\cdot\|_{s,r}$ when there is no misunderstanding about the integration set.

In particular we  write $H^1(\Omega)$ resp. $H^1_0(\Omega)$ instead of $W^{1,2}(\Omega)$ resp. $W^{1,2}_0(\Omega)$ and in this case we may denote the norm symply with
$\|u\|:=\|u\|_{H^{1}_0(\Omega)}=\int_{\Omega} (|\nabla u |^2+|u|^2)dx.$ 
 
$C^{m,\alpha}(\Omega)$ is the  subspaces of $C^{m}(\Omega)$ consisting of functions whose $m$-th order partial derivatives are {\it locally H\"older continuous with exponent $\alpha$ in $\Omega$}.

If $\Omega$ is bounded then we denote by $(C^{m,\alpha}(\bar\Omega), \|u\|_{C^{k,\alpha}(\bar\Omega)})$ the Banach space of all the functions belonging to $C^{m}(\bar\Omega)$ whose $m$-th order partial derivatives are {\it uniformly H\"older continuous with exponent $\alpha$ in $\bar\Omega$}.
 endowed with the usual norm.
 
$D^{1,2}(\mathbb R^3)$ is the closure of $C^{\infty}_0(\mathbb R^3)$ with respect to the norm $\|u\|_{D^{1,2}}=\|\nabla u\|_{L^2(\mathbb R^3)}.$
$$$$

The following facts are known (see for instance \cite[Lemma 0.3.1]{IanniPhdThesis}) and \cite[Lemma 2.1]{RuizJFA}):
\begin{lemma}\label{preliminariSuPhi}
For any $v\in H^1(\mathbb R^3)$ let $$\phi_v(x):=\int_{\mathbb R^3}\frac{v^2(y)}{|x-y|}dy.$$
Then 
\begin{itemize}
\item[i)] $\phi_v \in D^{1,2}(\mathbb R^3)$ and  there exists $C>0$ (independent of $v$) such that
$$\|\phi_v\|_{D^{1,2}}\leq C\|v\|^2.$$
Hence in particular there exists $C>0$ (independent of $v$) such that
\begin{equation}\label{HLSsemplice}\int\!\!\int\frac{w^2(x)v^2(y)}{|x-y|}dxdx\leq C \|w\|^2\|v\|^2\qquad\forall w\in H^1(\mathbb R^3).	\end{equation}
\item[ii)] $\phi_v$ is the unique weak solution in $D^{1,2}(\mathbb R^3)$ of the equation $-\Delta\phi_v=v^2$ in $\mathbb R^3.$\\
\item[iii)] If $v$ is radial, then $\phi_v$ is radial and has the following expression
$$\phi_v(r)=\frac{1}{r}\int_0^{+\infty} v^2(s)s\min\{r,s\}ds.$$
\item[iv)] Let $v_n, v\in  H^1(\mathbb R^3),$ radial and  satisfying $v_n\rightharpoonup v$ in $H^1(\mathbb R^3)$, then $\phi_{v_n}\rightarrow \phi_v$ in $D^{1,2}(\mathbb R^3).$
\end{itemize}
\end{lemma}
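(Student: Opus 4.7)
The plan is to view $v \mapsto \phi_v$ as the convolution of $v^2$ with the three-dimensional Newton kernel $1/|x|$, so that all four statements become standard consequences of the Hardy--Littlewood--Sobolev (HLS) inequality, the Sobolev embedding $H^1(\mathbb R^3) \hookrightarrow L^p(\mathbb R^3)$ for $p \in [2,6]$, and --- for item (iv) --- Strauss' compactness theorem for radial functions in $\mathbb R^3$.

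For (i), I would apply HLS in the form $\iint \tfrac{f(x)g(y)}{|x-y|}\,dx\,dy \le C\|f\|_{6/5}\|g\|_{6/5}$ (the admissible exponent pair in $\mathbb R^3$) with $f = w^2$ and $g = v^2$, obtaining
\[
\iint \frac{w^2(x)v^2(y)}{|x-y|}\,dx\,dy \le C\|w\|_{12/5}^2 \|v\|_{12/5}^2,
\]
and then use $H^1(\mathbb R^3) \hookrightarrow L^{12/5}(\mathbb R^3)$ to deduce \eqref{HLSsemplice}. For the $D^{1,2}$ bound I would first settle (ii) and then test $-\Delta \phi_v = v^2$ against $\phi_v$ itself: integration by parts yields $\|\nabla\phi_v\|_2^2 = \int v^2 \phi_v\,dx$, which is exactly the previous double integral with $w = v$, hence $\le C\|v\|^4$. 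For (ii), the fact that $\phi_v$ solves $-\Delta\phi_v = v^2$ in the distributional sense is the classical computation for the Newton potential (via Fubini and the identity $-\Delta(1/(4\pi|\cdot|)) = \delta_0$, with the constant absorbed into the normalization chosen here), while uniqueness in $D^{1,2}$ follows because the difference $\psi$ of two solutions is a $D^{1,2}$ harmonic function, and testing against $\psi$ itself (valid by density of $C_c^\infty$) forces $\int |\nabla\psi|^2 = 0$.

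For (iii), radial symmetry of $\phi_v$ is immediate from rotational invariance of the kernel. The explicit formula is then obtained by passing to spherical coordinates and using the classical mean-value identity $\int_{S^2} |x - s\omega|^{-1}\,d\sigma(\omega) = 4\pi/\max(|x|,s)$; the resulting factor $s^2/\max(r,s)$ is rewritten as $s \min(r,s)/r$ using $\max(r,s)\min(r,s) = rs$.

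For (iv), the key ingredient is the Strauss compact embedding $H^1_{\mathrm{rad}}(\mathbb R^3) \hookrightarrow L^p(\mathbb R^3)$ for $p \in (2,6)$: from $v_n \rightharpoonup v$ in $H^1$ (radial) it yields $v_n \to v$ strongly in $L^{12/5}$, and then H\"older applied to $(v_n - v)(v_n + v)$ gives $v_n^2 \to v^2$ strongly in $L^{6/5}$. Testing $-\Delta(\phi_{v_n} - \phi_v) = v_n^2 - v^2$ against $\phi_{v_n} - \phi_v \in D^{1,2}$ and using H\"older with $D^{1,2} \hookrightarrow L^6$ produces
\[
\|\phi_{v_n} - \phi_v\|_{D^{1,2}}^2 \le \|v_n^2 - v^2\|_{6/5}\|\phi_{v_n} - \phi_v\|_6 \le C\|v_n^2 - v^2\|_{6/5}\|\phi_{v_n} - \phi_v\|_{D^{1,2}},
\]
hence $\|\phi_{v_n} - \phi_v\|_{D^{1,2}} \le C\|v_n^2 - v^2\|_{6/5} \to 0$. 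The only genuinely nontrivial step is precisely this one: on the full space $H^1(\mathbb R^3)$ the embedding into $L^{12/5}$ is never compact, so the strong $D^{1,2}$-convergence in (iv) really does rely on the radial Strauss improvement, whereas the other items amount to book-keeping around HLS and standard Sobolev calculus.
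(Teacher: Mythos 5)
Your proof is correct and is essentially the argument the paper relies on: the paper gives no proof of this lemma at all, simply citing \cite{IanniPhdThesis} and \cite{RuizJFA}, and those references proceed exactly as you do (HLS with the exponent pair $6/5$, the Newton-potential identity for (ii), Newton's theorem $\int_{S^2}|x-s\omega|^{-1}d\sigma=4\pi/\max(|x|,s)$ for (iii), and the Strauss compact embedding $H^1_{\mathrm{rad}}\hookrightarrow\hookrightarrow L^{12/5}$ for (iv)). The one cosmetic point worth fixing is the ordering of (i) and (ii): to test $-\Delta\phi_v=v^2$ against $\phi_v$ itself you must already know $\phi_v\in D^{1,2}$, so you should first obtain the unique $D^{1,2}$ weak solution via Lax--Milgram (the functional $\psi\mapsto\int v^2\psi$ is bounded on $D^{1,2}$ by HLS and $D^{1,2}\hookrightarrow L^6$, which gives the bound $\|\phi_v\|_{D^{1,2}}\leq C\|v\|^2$ directly) and then identify it with the Newton potential, rather than the reverse.
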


Here and in the following for $v\in H^1_0(\B_R),$ we write again $v$ also for the trivial extension of $v$ in the whole $\mathbb R^3$, which belongs to $H^1(\R^3),$ one has that the nonlocal term which appears in the equation \eqref{problemaDiDirichletNellaPalla} coincides with the restriction of $\phi_v$ to $\B_R.$
\\

Hence if $u\in H^1_0(\B_R)$ is a solution of \eqref{problemaDiDirichletNellaPalla} then the couple $(u,\phi_u|_{\B_R})$ is a solution of the 
 SPS system in the ball
$$\left\{\begin{array}{lr}-\Delta u+u+\phi u -|u|^{q-1}u=0 \quad\mbox{ in }\B_R\\-\Delta\phi=u^2\qquad\qquad\quad\qquad\qquad\mbox{ in }\B_R\\u=0\qquad\qquad\qquad\qquad\qquad\quad\mbox{ on }\partial \B_R\end{array}\right.
$$
We underline that $\phi_u$ is not $0$ on $\partial\B_R$.

\begin{remark}
\label{Schauder regularity}
 If $v\in W^{1,p}_0(\B_R)$ $,p>3$ then $\phi_v\in C^{2,\alpha}(\bar\B_R)$ and satisfies $-\Delta\phi_v=v^2$ in $\B_R.$

Indeed by Sobolev embedding ($p>3$) $v\in W^{1,p}_0(\B_R)\hookrightarrow C^{0,\alpha}(\bar\B_R)$ and $v=0$ on $\partial\B_R$. Let $\tilde v$ be the trivial extension of $v$ in $\B_{2R}$ then $\tilde v^2\in C^{0,\alpha}(\bar\B_{2R})$ and so (see \cite[Lemma 4.2 and 4.4]{GilbargTrudinger}) $\phi_v\in C^{2,\alpha}(\bar\B_R)$ and satisfies $-\Delta\phi_v=v^2$ in $\B_R.$
\end{remark}
$\;$\\\\

\section{The associated parabolic problem}\label{sectionParabolic}
We consider the semilinear nonlocal parabolic initial-boundary-value  problem (IBVP) associated with \eqref{problemaDiDirichletNellaPalla} which has been studied in \cite{IanniPreprintParabolic}:
\begin{equation}\label{IBVP}
 \left\{
\begin{array}{l}
 \frac{\partial u}{\partial t}-\Delta u +u=F(u)\qquad \mbox{ in } \B_R\times (0,+\infty),\\
u=0 \qquad\qquad\qquad\qquad\mbox{ in } \partial\B_R\times (0,+\infty),\\
u(\cdot,0)=u_0 \qquad\qquad\qquad\mbox{ on }\B_R,
\end{array}
\right.
\end{equation}
where the nonlinearity is 
$$F(u):=|u|^{q-1}u-u\int_{\B_R}\frac{1}{|x-y|}u^2(y)dy, \qquad q\in[3,5].$$
$$$$

Next we fix $3< p<\infty$ and $\sigma\in (1+\frac{3}{p}, 2)$ and we consider the function spaces

$$X=\left\{u\in W^{\sigma,p}_{0}(\B_R): u\mbox{ radially summetric}\right\},$$
$$Y=\{u\in C^1(\bar{\B}_R): u \mbox{ is radial and } u=0 \mbox{ on }\partial\B_R\}. $$
$$$$

We have the embedding  $X\hookrightarrow Y,$ since $\sigma>1+\frac{3}{p}.$
$$$$
We recall  the following result related to local existence and regularity which can be found in  \cite{IanniPreprintParabolic}. 

\begin{theorem}
For every $u_0\in X$ the IBVP \eqref{IBVP} has a unique (mild) solution $u(t)=\varphi(t,u_0)\in C([0,T),X)$ with maximal existence time $T:=T(u_0)>0$ which is a classical solution for $t\in (0,T).$

The set $\mathcal G:=\{(t,u_0): t\in [0,T(u_0))\}$ is open in $[0,\infty)\times X,$ and $\varphi:\mathcal G\rightarrow X$ is a semiflow on $X.$ Moreover $\varphi (\cdot,u_0):(0,T(u_0))\rightarrow X$ is  continuous uniformly with respect to  $u_0\in X$ and $\varphi (t,\cdot):X\rightarrow X$ is  locally Lipschitz continuous uniformly with respect to  $t\in (0,T(u_0))$.
\end{theorem}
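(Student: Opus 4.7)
The plan is to write the equation abstractly as $u_t + Au = F(u)$ on the radial subspace of $L^p(\B_R)$, where $A=-\Delta+I$ with Dirichlet boundary conditions, and then apply the standard abstract theory of semilinear parabolic equations with analytic semigroups (in the style of Henry or Amann). The operator $A$, restricted to radial $L^p$ functions, is sectorial by the $L^p$-regularity theory of Agmon-Douglis-Nirenberg and hence generates an analytic semigroup $e^{-tA}$ on $L^p_{\text{rad}}(\B_R)$. The choice $\sigma\in(1+3/p,2)$ is dictated by the interpolation identification $D(A^{\sigma/2})\simeq W^{\sigma,p}_0$ (restricted to the radial subspace), so that $X$ is precisely an intermediate fractional-power space. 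The embedding $X\hookrightarrow Y\hookrightarrow C^0(\bar\B_R)$ supplies the $L^\infty$-control on $u$ that makes the nonlinearity tractable.

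Next I would verify that $F:X\to L^p_{\text{rad}}(\B_R)$ is locally Lipschitz. The local part $|u|^{q-1}u$ is immediate from $X\hookrightarrow L^\infty$, since $s\mapsto|s|^{q-1}s$ is locally Lipschitz on $\R$. For the nonlocal term $u\,\phi_u$, I would invoke Lemma \ref{preliminariSuPhi}: the bound $\|\phi_u\|_{D^{1,2}}\leq C\|u\|_{H^1}^2$ together with Sobolev embedding in $\R^3$ gives $\phi_u\in L^\infty(\B_R)$ with $\|\phi_u\|_\infty$ controlled by $\|u\|_X^2$ (after trivial extension). Writing
\[
u\phi_u-v\phi_v=(u-v)\phi_u+v(\phi_u-\phi_v),
\]
and using that $\phi_u-\phi_v$ is bilinear in $(u+v)$ and $(u-v)$ through the kernel $|x-y|^{-1}$, one obtains the local Lipschitz estimate on bounded subsets of $X$.

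With these two ingredients I would directly invoke the abstract local existence theorem for semilinear parabolic equations governed by analytic semigroups (e.g.\ Henry, Theorem 3.3.3 and Corollary 3.3.5). This produces the unique mild solution $\varphi(\cdot,u_0)\in C([0,T(u_0)),X)$ satisfying the variation-of-constants formula
\[
u(t)=e^{-tA}u_0+\int_0^t e^{-(t-s)A}F(u(s))\,ds,
\]
together with the openness of $\mathcal G$ in $[0,\infty)\times X$ and the semiflow property. Local uniform continuity in $t$ and local Lipschitz continuity in $u_0$ (uniform in $t$ on compact subintervals of $(0,T(u_0))$) follow from a Gronwall argument on the difference of two mild solutions, exploiting the smoothing estimates $\|A^\beta e^{-tA}\|\leq C t^{-\beta}$ of the analytic semigroup. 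Classical regularity for $t>0$ is then obtained by a standard bootstrap: once $u(t)\in X\hookrightarrow C^1(\bar\B_R)$, Remark \ref{Schauder regularity} yields $\phi_{u(t)}\in C^{2,\alpha}(\bar\B_R)$, so $F(u)$ is H\"older regular in space and time, and parabolic Schauder estimates upgrade $u$ to a classical solution in $(0,T)$.

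The main obstacle is the careful verification that the cubic-nonlocal term $u\,\phi_u$ is locally Lipschitz from $X$ into $L^p_{\text{rad}}$ with a constant depending only on the $X$-radius; this is where the radial setting, the precise choice of $\sigma$, and the embedding $X\hookrightarrow C^1$ are all used simultaneously. Once this technical estimate is in place, the existence, uniqueness, openness of $\mathcal G$, the semiflow property, and the continuous/Lipschitz dependence statements are all routine consequences of the abstract theory.
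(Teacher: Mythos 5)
The paper does not prove this theorem: it is explicitly recalled from the author's companion preprint \cite{IanniPreprintParabolic}, so there is no in-paper argument to compare against. Your sketch is the standard abstract route (sectorial operator, intermediate space $X$, locally Lipschitz nonlinearity into $L^p$, variation of constants, Gronwall, bootstrap to classical regularity) and, given the Amann references in the bibliography and the choice of $W^{\sigma,p}_0$ with $\sigma\in(1+\tfrac3p,2)$, it is almost certainly the argument of the cited preprint. Two small corrections: first, $\|\phi_u\|_{D^{1,2}}\leq C\|u\|^2$ and Sobolev embedding give only $\phi_u\in L^6(\R^3)$, not $L^\infty$; the $L^\infty$ bound you actually need follows directly from $\phi_u(x)\leq\|u\|_\infty^2\sup_x\int_{\B_R}|x-y|^{-1}dy$ on the bounded ball (this is exactly the estimate used in the proof of Lemma \ref{NumRegioniNonCresceLungoTraiettoria}), and the same kernel bound handles $\|\phi_u-\phi_v\|_\infty$ in your bilinear decomposition. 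Second, the identification $D(A^{\sigma/2})\simeq W^{\sigma,p}_0$ is not exact for Sobolev--Slobodeckii exponents; one should read $X$ as a real-interpolation space in Amann's scale rather than a fractional power domain, but the local theory is insensitive to this choice and the conclusion is unaffected.
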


In particular the following continuity property with respect to the initial datum holds

\begin{corollary}\label{continuitaDatoIn}
For every $u_0\in X$ and every $t\in (0,T(u_0))$ there is a neighborhood $U\subset X$ of $u_0$ in $X$ such that $T(u)>t$ for $u\in U,$ and $\varphi (t,\cdot):(U,\|\cdot\|_X)\rightarrow (Y,\|\cdot\|_{Y})$ is a continuous map.
\end{corollary}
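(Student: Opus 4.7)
The plan is to extract the statement directly from the preceding theorem together with the Sobolev embedding $X\hookrightarrow Y$ that was recorded just before.

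First, since the theorem asserts that the set $\mathcal G=\{(t,u_0):t\in[0,T(u_0))\}$ is open in $[0,\infty)\times X$, for any fixed $u_0\in X$ and any $t\in(0,T(u_0))$ the point $(t,u_0)\in\mathcal G$. By openness there exists a neighborhood $V\subset[0,\infty)\times X$ of $(t,u_0)$ contained in $\mathcal G$; in particular one finds a neighborhood $U_1\subset X$ of $u_0$ such that $\{t\}\times U_1\subset\mathcal G$, which is exactly the condition $T(u)>t$ for every $u\in U_1$.

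Next, the theorem states that for any $t_0\in(0,T(u_0))$ the map $\varphi(t_0,\cdot):X\to X$ is locally Lipschitz continuous, uniformly in $t_0$ on compact subsets of $(0,T(u_0))$. In particular, shrinking the neighborhood if necessary to some $U\subset U_1$ on which this Lipschitz estimate holds at time $t$, we obtain that $\varphi(t,\cdot):(U,\|\cdot\|_X)\to(X,\|\cdot\|_X)$ is (Lipschitz) continuous. Composing with the continuous embedding $X=\{u\in W^{\sigma,p}_0(\B_R):u\text{ radial}\}\hookrightarrow Y=\{u\in C^1(\bar\B_R):u\text{ radial},\ u|_{\partial\B_R}=0\}$ (valid because $\sigma>1+3/p$, so $W^{\sigma,p}\hookrightarrow C^1$), gives the desired continuity $\varphi(t,\cdot):(U,\|\cdot\|_X)\to(Y,\|\cdot\|_Y)$.

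There is essentially no obstacle: the corollary is a direct repackaging of the openness of $\mathcal G$ (to secure $T(u)>t$ on a neighborhood) and of the local Lipschitz continuity of $\varphi(t,\cdot)$ in $X$ from the theorem, combined with the continuous Sobolev embedding $X\hookrightarrow Y$ to upgrade the target space from $X$ to $Y$. The only minor care needed is to choose the neighborhood $U$ inside the ball on which the Lipschitz constant from the theorem is controlled, which is possible because that constant is uniform in $t$ on compact time intervals and $t$ is fixed here.
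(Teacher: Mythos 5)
Your argument is correct and is essentially the one the paper intends: the corollary is stated as an immediate consequence of the preceding theorem, and your derivation (openness of $\mathcal G$ to secure $T(u)>t$ on a neighborhood, local Lipschitz continuity of $\varphi(t,\cdot)$ in $X$, then composition with the continuous embedding $X\hookrightarrow Y$) is exactly that deduction.
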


In the following we will often write $\varphi^t(u)$ instead of $\varphi(t,u).$\\

The energy $E$ is strictly decreasing along nonconstant trajectories $t\mapsto \varphi^t(u_0)$ in $X.$
Infact for a classical solution of \eqref{IBVP} we have
\begin{eqnarray}\label{stimaDerivE}
\dot E
&=&\frac{d}{dt}E(u)=
\frac{d}{dt}\int\left(\frac{1}{2}|\nabla u|^2+\frac{1}{2}|u|^2-\frac{1}{q+1}|u|^{q+1}+\frac{1}{4}\phi_uu^2 \right)dx\nonumber\\
&=& \int\left(\nabla u\nabla u_t +uu_t-|u|^{q-1}uu_t+\phi_uuu_t\right)dx\nonumber\\
&=& \int\left(-\Delta u+u-|u|^{q-1}u+\phi_uu\right)u_tdx\nonumber\\
&=& -\int u_t^2 dx=-\|u_t\|_2^2,
\end{eqnarray} 
this property is crucial in order to prove the following global existence result and compactness property (see \cite{IanniPreprintParabolic} for the proof)

\begin{theorem}
Let $u_0\in X$ and $T=T(u_0)$ be such that the function $t\mapsto E(\varphi^t(u_0))$ is bounded from below in $(0,T.)$ Then $T=\infty$ and for every $\delta>0$ the set $\{\varphi^t(u_0):t\geq\delta\}$ is bounded in $W^{s_1,p}(\B_R)$ for every $s_1\in [\sigma, 2)$ and hence relatively compact in $C^1(\bar{\B}_R).$
\end{theorem}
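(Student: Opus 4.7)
The strategy is to exploit the energy dissipation identity \eqref{stimaDerivE} to derive an a priori bound on $u(t)$ in $H^1_0(\B_R)$, and then to bootstrap through the parabolic semigroup smoothing to obtain regularity up to $W^{s_1,p}(\B_R)$.

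First I would use \eqref{stimaDerivE}: integrating $\dot E=-\|u_t\|_2^2$ on $(0,t)\subset(0,T)$, the assumption $E(\varphi^t(u_0))\geq m$ combined with the monotonicity $E(\varphi^t(u_0))\leq E(u_0)$ yields $\int_0^T\|u_t(s)\|_2^2\,ds\leq E(u_0)-m$, independently of $T$. Next, testing \eqref{IBVP} against $u$ produces
\begin{equation*}
\tfrac{1}{2}\tfrac{d}{dt}\|u\|_2^2=-\|u\|^2+\|u\|_{q+1}^{q+1}-\int\phi_u u^2,
\end{equation*}
and eliminating $\|u\|_{q+1}^{q+1}$ through the expression of $E$ gives
\begin{equation*}
\tfrac{1}{2}\tfrac{d}{dt}\|u\|_2^2=\tfrac{q-1}{2}\|u\|^2+\tfrac{q-3}{4}\int\phi_u u^2-(q+1)E(u),
\end{equation*}
in which \emph{both quadratic coefficients are nonnegative because $q\geq 3$}. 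Combining this identity with $E(u)\leq E(u_0)$, with the $L^2_tL^2_x$-bound on $u_t$ just obtained, and with the Cauchy--Schwarz estimate $|\tfrac{d}{dt}\|u\|_2^2|\leq 2\|u\|_2\|u_t\|_2$, a standard averaging argument on subintervals of $(0,T)$ then delivers a uniform bound on $\|u(t)\|_{H^1_0}$ throughout $[0,T)$.

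With the $H^1_0$-bound in hand, the subcritical growth of $F$, the positivity of the nonlocal term, and the Sobolev embedding $H^1_0(\B_R)\hookrightarrow L^6(\B_R)$ provide an initial uniform bound on $\|F(u(t))\|_{p_0}$ for some $p_0>1$. Writing the mild formula
\begin{equation*}
u(t)=e^{-(t-\delta/2)A}u(\delta/2)+\int_{\delta/2}^{t} e^{-(t-s)A}F(u(s))\,ds,\qquad A:=-\Delta+1,
\end{equation*}
and applying iteratively the fractional-power smoothing estimate $\|e^{-\tau A}\|_{\mathcal L(L^p,W^{s,p}_0)}\lesssim \tau^{-s/2}e^{-\lambda\tau}$ (combined with Sobolev embeddings) raises the integrability step by step, until a uniform bound on $\|u(t)\|_{W^{s_1,p}}$ on $[\delta,T)$ is obtained for any prescribed $p>3$ and $s_1\in[\sigma,2)$. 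Such a bound is incompatible with blow-up of $\|u(t)\|_X$, whence $T=\infty$; and it is precisely the claimed estimate. Relative compactness in $C^1(\bar\B_R)$ then follows from the compact embedding $W^{s_1,p}(\B_R)\hookrightarrow C^1(\bar\B_R)$, which holds because $s_1>1+\tfrac{3}{p}$.

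The delicate point is the $H^1_0$-bound, since $E$ itself is not coercive on $H^1_0(\B_R)$: a large $\|u\|$ could in principle be compensated by a large $\|u\|_{q+1}^{q+1}$. The testing identity displayed above, whose sign structure crucially requires $q\geq 3$, is precisely the device that converts the energy dissipation into the required Sobolev control; the subsequent bootstrap is then fairly routine parabolic smoothing.
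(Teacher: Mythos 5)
The paper does not actually prove this theorem: it is recalled from the companion preprint \cite{IanniPreprintParabolic}, so there is no in-text argument to compare yours against. Judged on its own, your strategy is the standard one for such results and the key computation is correct: testing the equation with $u$ and eliminating $\|u\|_{q+1}^{q+1}$ via $E$ indeed gives $\tfrac12\tfrac{d}{dt}\|u\|_2^2=\tfrac{q-1}{2}\|u\|^2+\tfrac{q-3}{4}\int\phi_uu^2-(q+1)E(u)$, and the sign condition $q\geq3$ (together with $\phi_u\geq0$) is exactly what makes both quadratic terms useful. The Duhamel bootstrap and the compactness of $W^{s_1,p}(\B_R)\hookrightarrow C^1(\bar\B_R)$ for $s_1>1+\tfrac3p$ are also fine.

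The one genuine gap is the sentence claiming that the averaging argument ``delivers a uniform bound on $\|u(t)\|_{H^1_0}$ throughout $[0,T)$.'' What your identity actually yields, after absorbing $\|u\|_2\|u_t\|_2\leq\tfrac{q-1}{4}\|u\|^2+\tfrac{1}{q-1}\|u_t\|_2^2$, is the pointwise inequality $\|u(t)\|^2\leq C\bigl(1+\|u_t(t)\|_2^2\bigr)$; since $\|u_t\|_2$ is controlled only in $L^2$ of time, this gives an $L^2_t$-bound on $\|u(t)\|$ and a uniform bound only at a sequence of ``good'' times $s_j$ with bounded gaps, not at every $t$. This matters because the subsequent variation-of-constants bootstrap requires $\sup_t\|F(u(t))\|_{p_0}<\infty$: an $L^{2/q}_t$ control of $\|F(u(t))\|_{p_0}$ (which is all the $L^2_t H^1$ bound provides, and $2/q<1$) is not enough to make the singular convolution $\int(t-s)^{-\gamma}e^{-\lambda(t-s)}\|F(u(s))\|_{p_0}ds$ converge. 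To close the argument you must insert an intermediate step: a uniform-in-time local existence/smoothing estimate in $H^1_0(\B_R)$ (valid here because $q<5$ makes the problem $H^1$-subcritical), applied on intervals $[s_j,s_j+\tau_0]$ starting from the good times, which propagates the bound $\|u(s_j)\|\leq C$ to a bound $\sup_{t\geq\delta}\|u(t)\|\leq C'$. Only after that does your bootstrap to $W^{s_1,p}$, the contradiction with blow-up of $\|u(t)\|_X$, and the compactness conclusion go through. This missing step is standard but not automatic, and it is precisely where the restriction $q<5$ (and not merely $q\geq3$) enters.
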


\begin{corollary}\label{globalExResult}
Let $u_0\in X$ and $T=T(u_0)$ be such that the function $t\mapsto E(\varphi^t(u_0))$ is bounded from below in $(0,T.)$ Then $T=\infty$ and the $\omega$-limit set
$$\omega (u_0)=\cap_{t>0}Clos_{Y}\left(\{\varphi^s(u_0)\ :\ s\geq t\}\right)$$
is a nonempty compact subset of $Y$ consisting of radial solutions of \eqref{problemaDiDirichletNellaPalla}.
\end{corollary}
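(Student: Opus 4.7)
The global existence $T=\infty$ and the relative compactness of the trajectory $\{\varphi^t(u_0):t\geq\delta\}$ in $C^1(\bar\B_R)$ (hence in $Y$) are immediate from the preceding theorem. From this relative compactness, the plan is standard LaSalle-invariance reasoning adapted to our setting: show that $\omega(u_0)$ is nonempty and compact in $Y$, and then use the strict Lyapunov property \eqref{stimaDerivE} to force every element of $\omega(u_0)$ to be an equilibrium of the semiflow $\varphi^t$ on $X$, which is exactly a solution of \eqref{problemaDiDirichletNellaPalla}.

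First, for nonemptiness and compactness: pick any sequence $t_n\to\infty$; by the relative compactness of the trajectory in $Y$, a subsequence $\varphi^{t_{n_j}}(u_0)$ converges in $Y$ to some $v$, and by definition $v\in\omega(u_0)$, so $\omega(u_0)\neq\emptyset$. Compactness follows because $\omega(u_0)$ is a closed subset of the $Y$-compact set $\mathrm{Clos}_Y\{\varphi^t(u_0):t\geq 1\}$, being an intersection of closed sets.

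The main step is showing each $v\in\omega(u_0)$ is an equilibrium. The energy $E(\varphi^t(u_0))$ is nonincreasing by \eqref{stimaDerivE} and bounded below by hypothesis, so it converges to some limit $E_\infty\in\mathbb R$ as $t\to\infty$. Given $v\in\omega(u_0)$ pick $t_n\to\infty$ with $\varphi^{t_n}(u_0)\to v$ in $Y$; since the trajectory is bounded in $W^{s_1,p}(\B_R)$ for $s_1\in[\sigma,2)$, after passing to a subsequence we may assume convergence also in $X$, which allows us to apply the continuity statement of Corollary \ref{continuitaDatoIn}: for any fixed $t\geq 0$,
\begin{equation*}
\varphi^{t+t_n}(u_0)=\varphi^t(\varphi^{t_n}(u_0))\longrightarrow \varphi^t(v)\quad\text{in }Y.
\end{equation*}
Since $E$ is continuous on $Y$ (the local term and the nonlocal term $\iint u^2(x)u^2(y)/|x-y|\,dx\,dy$ are continuous in $C^1(\bar\B_R)$-convergence by Lemma \ref{preliminariSuPhi}), this gives $E(\varphi^t(v))=\lim_n E(\varphi^{t+t_n}(u_0))=E_\infty$ for every $t\geq 0$. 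Because $E$ is strictly decreasing along nonconstant trajectories, $\varphi^t(v)\equiv v$, i.e.\ $v$ is an equilibrium of the semiflow, hence a classical radial solution of \eqref{problemaDiDirichletNellaPalla}. Radial symmetry of $v$ is inherited from the invariance of $X$ under the flow.

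The main technical obstacle I anticipate is the justification that $E(\varphi^t(v))=\lim_n E(\varphi^{t+t_n}(u_0))$: this requires (i) enough regularity of $\varphi^{t_n}(u_0)$ to be taken as an initial datum in $X$ (secured by the $W^{s_1,p}$-boundedness part of the preceding theorem, since $s_1>\sigma$ implies convergence in $X$ along a subsequence), and (ii) continuity of the nonlocal energy term under the corresponding convergence, which follows from part (iv) of Lemma \ref{preliminariSuPhi} once we observe that $C^1(\bar\B_R)$-convergence implies $H^1$-convergence on the ball and hence convergence of $\phi_{v_n}\to \phi_v$ in $D^{1,2}(\mathbb R^3)$.
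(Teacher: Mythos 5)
Your proof is correct and is exactly the standard LaSalle-invariance argument that the paper implicitly intends (the corollary is stated without proof, with the underlying existence/compactness results deferred to \cite{IanniPreprintParabolic}): global existence and $Y$-compactness of the trajectory from the preceding theorem, then the strict Lyapunov property \eqref{stimaDerivE} together with the continuity in Corollary \ref{continuitaDatoIn} forces every $\omega$-limit point to be an equilibrium. The one detail worth making explicit is that you need $T(v)>t$ before invoking Corollary \ref{continuitaDatoIn} at $v$; this is harmless since $E(\varphi^s(v))\leq E(v)=E_\infty$ is bounded below on $(0,T(v))$, so the global existence theorem gives $T(v)=\infty$ first, and the rest of your argument goes through verbatim.
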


$\;$\\\\
Next we show that the number of nodal regions of a solution of \eqref{IBVP} is nonincreasing along the flow.\\
Given $u\in X$ we define the number of sign changes in the radial variable $i(u)$ of $u$ 
as the maximal $k\in\N\cup\{0,\infty\}$ such that there exist points $x_1,.., x_{k+1}\in\B$ with $0\leq |x_1|<..<|x_{k+1}|<R$ and $u(x_i)u(x_{i+1})<0$ for $i=1,..,k.$

\begin{lemma}\label{NumRegioniNonCresceLungoTraiettoria}
 Let $u_0\in X$ and $T=T(u_0).$ Then $t\mapsto i(\varphi^t(u_0))$ is nonincreasing in $t\in [0,T)$.
\end{lemma}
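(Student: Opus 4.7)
The strategy is to reduce \eqref{IBVP} to a non-degenerate one-dimensional linear parabolic equation on $[0,R]$ via the standard three-dimensional substitution $w := r\,u$, and then invoke the classical Sturmian zero-number theorem, in the spirit of \cite{WeiWeth}.

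Writing the classical radial solution $\varphi^t(u_0)$ as $u = u(r,t)$ on $[0,R] \times (0, T(u_0))$ and setting $w(r,t) := r\, u(r,t)$, the three-dimensional radial identity $r\Delta u = (ru)_{rr}$ transforms \eqref{IBVP} into the non-degenerate one-dimensional linear equation
$$w_t - w_{rr} + c(r,t)\, w = 0 \quad \text{on } (0,R) \times (0, T(u_0)),$$
with coefficient $c(r,t) := 1 + \phi_{u(\cdot,t)}(r) - |u(r,t)|^{q-1}$ and Dirichlet boundary data $w(0,t) = 0$ (automatic) and $w(R,t) = 0$. For every $t > 0$, the classical solution satisfies $u(\cdot, t) \in C^1(\bar{\B}_R)$, and Remark \ref{Schauder regularity} yields $\phi_{u(\cdot, t)} \in C^{2,\alpha}(\bar{\B}_R)$; hence $c$ is continuous on $[0,R] \times (0, T(u_0))$.

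The Sturmian zero-number theorem (Angenent, Matano) then asserts that the number $Z[w(\cdot, t)]$ of zeros of $w(\cdot, t)$ in $(0,R)$ is finite for every $t > 0$ and nonincreasing in $t$. Since $r > 0$ on $(0,R)$, the zeros of $w(\cdot, t)$ there coincide with those of $u(\cdot, t)$, so $i(\varphi^t(u_0)) \leq Z[w(\cdot, t)]$, and monotonicity on $(0, T(u_0))$ follows. To extend the bound down to $t = 0$: by Corollary \ref{continuitaDatoIn}, $\varphi^s(u_0) \to u_0$ in $C^1(\bar{\B}_R)$ as $s \to 0^+$, so no new sign change can appear in a region where $u_0$ is strictly sign-definite; and the parabolic maximum principle applied to the linear equation for $w$ forbids the emergence of a sign change from a non-transversal zero of $u_0$.

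The main technical obstacle is the singular coefficient $2/r$ in the radial Laplacian at $r=0$, which precludes a direct one-dimensional application of the zero-number theorem. This is resolved cleanly by the substitution $w = r u$, which converts the singular radial operator into a flat one-dimensional second derivative on $[0,R]$ with Dirichlet conditions at both endpoints. The nonlocal coupling causes no additional trouble, since $\phi_u$ enters the transformed equation merely as a continuous lower-order coefficient by Remark \ref{Schauder regularity}.
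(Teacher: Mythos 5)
Your strategy is the same as the paper's: absorb the nonlinearity and the nonlocal term into a coefficient $c(x,t)=1+\phi_{u(\cdot,t)}(x)-|u(x,t)|^{q-1}$, view $u$ as a solution of a \emph{linear} parabolic equation, and invoke a Sturmian zero-number theorem. The paper cites the radial zero-number result of Chen--Pol\'a\v{c}ik directly on the ball, whereas you first pass to $w=ru$ to flatten the radial Laplacian and then quote Angenent/Matano in one dimension; these are interchangeable packagings of the same classical machinery, and your reduction $r\Delta u=(ru)_{rr}$, $w(0,t)=w(R,t)=0$ is correct.

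There is, however, one substantive step that you assert rather than prove, and it is exactly the step the paper identifies as ``the only thing which remains to be proved'': the boundedness of the coefficient $c$ on $\bar\B_R\times[0,\tau]$, \emph{jointly} in space and time. Your justification via Remark \ref{Schauder regularity} only gives regularity of $x\mapsto\phi_{u(\cdot,t)}(x)$ for each \emph{fixed} $t$; it says nothing about the behaviour in $t$, and without uniform-in-$t$ bounds on $[0,\tau]$ (including down to $t=0$) the zero-number theorem cannot be applied on the whole time interval. The missing argument is short but necessary: since $t\mapsto u(\cdot,t)$ is continuous into $X\hookrightarrow C^0(\bar\B_R)$, one estimates
$$|\phi_u(x,t_n)-\phi_u(x,t_0)|\ \le\ \max_{y\in\bar\B_R}\bigl|u^2(y,t_n)-u^2(y,t_0)\bigr|\int_{\B_R}\frac{dy}{|x-y|}\ \longrightarrow\ 0,$$
using that $\int_{\B_R}|x-y|^{-1}dy$ is bounded uniformly in $x$; this yields continuity, hence boundedness, of $c$ on the compact set $\bar\B_R\times[0,\tau]$. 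Relatedly, your treatment of the initial time is vaguer than it needs to be: once $c\in L^\infty(\B_R\times[0,\tau])$ is established, the cited zero-number theorems already give monotonicity on the closed interval $[0,\tau]$ (the drop of the zero number of $w(\cdot,t)$ to at most the sign-change number of the initial trace as $t\downarrow 0$ is part of their statement), so no separate maximum-principle argument at $t=0$ is required. With the joint continuity of $\phi_u$ supplied, your proof is complete and coincides with the paper's.
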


\begin{proof}
In view of the semiflow properties, it sufficies to show the inequality
$i(\varphi^{\tau}(u_0))\leq i(u_0)$ for a fixed $\tau\in (0,T).$
Since
$u(t):=\varphi^t(u_0)$  satisfies the equation
$$u_t-\Delta u+ f(x,t)u=0\mbox{ in }\B_R\times (0,\tau]$$ where $f(x,t):= 1+\phi_u(x,t)-|u(x,t)|^{q-1},$ to prove the result we can follow
the arguments in \cite[Theorem 2.1]{ChenPolacik} (see also  \cite[Lemma 2.5]{WeiWeth}
). The only thing which remains to be proved is therefore that
$f$ is bounded in $\B_R\times [0,\tau].$

On this scope we observe that $|u|^{q-1}$ is continuous and hence bounded in $\bar\B_R\times [0,\tau]$ (indeed $X\hookrightarrow C^{0,\alpha}_r,$  $u(\cdot, t)\in X$ for every $t$ and $t\mapsto u(t,x)\in C([0,T), X )$).\\
We show now that  $\phi_u(x,t)=\int_{\B_R}\frac{u^2(y,t)}{|x-y|}dy$ is continuous and hence bounded in $\bar\B_R\times [0,\tau].$
The continuity of the function $x\mapsto \phi_u(x,t)$ in $\bar\B_R$  for fixed $t\in[0,\tau]$ is trivial (see Remark \ref{Schauder regularity}).
We fix now  $x\in\bar\B_R$ and show that the function $t\mapsto\phi_u(x,t)$ is continuous in $[0,\tau].$ 

Let $(t_n)_n\subset [0,\tau]$ $t_n\rightarrow_n t_0,$ we want to prove that $\phi_u(x,t_n)\rightarrow_n\phi_u(x,t_0).$ This follows from the following:
\begin{eqnarray*}
\lim_n |\phi_u(x,t_n)-\phi_u(x,t_0)|&=&\lim_n \left|\int_{\B_R}\left(\frac{u^2(y,t_n)}{|x-y|}-\frac{u^2(y,t_0)}{|x-y|}\right)dy\right|\\
&\leq& \lim_n \max_{y\in\bar\B}\left|u^2(y,t_n)-u^2(y,t_0)\right|\int_{\B_R}\frac{1}{|x-y|}dy=0
\end{eqnarray*}
\end{proof}
$$$$

\section{A finite dimensional subspace of $X$}\label{SPazioFinitoDim}
In this section we define, for any fixed integer $k\geq 2$  a conveninet $k$-dimensional subspace $W_k\subset X$ such that the restriction to it of the energy functional $E:H^1_0(\B_R)\rightarrow \mathbb R$

$$E(u)=\frac{1}{2}\int|\nabla u(x) |^2dx+\frac{1}{2}\int u(x)^2dx+\frac{1}{4}\int\int\frac{u^2(x)u^2(y)}{|x-y|} dxdy-\frac{1}{q+1}\int |u|^{q+1}(x) dx$$

has some ``good properties'' (it is unbounded below ``at infinity'' and bounded  from above uniformly in the radius $R$ of the domain).

As we will see, we need to pay particular attention to the case $q=3,$ since the geometric properties of the energy $E$ are different (see condition \eqref{quguale3} below).

The spaces $W_k$ will be useful in next section to prove our results.\\\\

First for $0<a<b$ we define the annulus $\mathbb A_{a,b}:=\{x\in\mathbb R^3: a<|x|<b\}.$\\

Then we fix $k$ radial  functions $w_i\in C^2(\mathbb R^3)$ $i=1,..,k$ with disjoint supports which satisfy the following properties:

$$\left\{\begin{array}{lr}w_1>0\quad\mbox{ in }\mathbb B_{\frac{1}{k}}\\
w_1=0\quad \mbox{ in }\mathbb R^3\setminus\mathbb B_{\frac{1}{k}}\end{array}\right.$$

$$\left\{\begin{array}{lr}w_i>0\quad\mbox{ in }\mathbb A_{\frac{i-1}{k},\frac{i}{k}}\qquad i=2,..,k\\w_i=0\quad \mbox{ in }\mathbb R^3\setminus  \mathbb A_{\frac{i-1}{k},\frac{i}{k}}\qquad i=2,..,k\end{array}\right.$$

We can always assume that 
\begin{equation}\label{Nehari}\|w_i\|^2=\|w_i\|^{q+1}_{q+1}\qquad i=1,..,k\end{equation}
(if not we just rescale $w_i$).

We also define
$$M_k:=\max_{i=1,..,k}\|w_i\|^2>0,\quad m_k:=\min_{i=1,..,k}\|w_i\|^2>0.$$
Last for $q=3$ we require also that
\begin{equation}\label{quguale3}
M_k<\frac{1}{k}\end{equation}


\begin{lemma}\label{lemmaWminore} Let $$W_k:=\left\{w=w_{(t_1,..,t_k)}:=\sum_{j=1}^k t_jw_j\quad :\  (t_1,..,t_k)\in\R^k   l \right\}\ \subset\  X.$$
Then there exists $C_k>0$ such that 
\begin{equation}\label{stimaE2}E(w)\leq C_k\quad\mbox{ for all }w\in W_k.\end{equation}
Moreover \begin{equation}\label{unboundedBelow2}\lim_{\small{\begin{array}{cr}\|w\|\rightarrow \infty\\w\in W_k
\end{array} }}E(w)=-\infty\end{equation}
\end{lemma}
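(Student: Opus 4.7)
The plan is to expand $E$ explicitly on $W_k$ by exploiting the disjointness of the supports of $w_1,\ldots,w_k$. For $w = w_{(t_1,\ldots,t_k)} = \sum_j t_j w_j$ one has $|w(x)|^{\alpha} = \sum_j |t_j|^{\alpha}|w_j(x)|^{\alpha}$ pointwise for every $\alpha\geq 0$, and the $H^1$-cross terms $\int\nabla w_i\cdot\nabla w_j + \int w_i w_j$ vanish for $i\neq j$. Combined with the Nehari-type normalization \eqref{Nehari}, this yields
\begin{equation*}
E(w) = \sum_{j=1}^k \|w_j\|^2\!\left(\frac{t_j^2}{2} - \frac{|t_j|^{q+1}}{q+1}\right) + \frac{1}{4}\sum_{i,j=1}^k t_i^2 t_j^2 D_{ij}, \qquad D_{ij} := \iint\frac{w_i^2(x) w_j^2(y)}{|x-y|}\,dx\,dy.
\end{equation*}
Since every $w_j$ is supported in $\mathbb{B}_1\subseteq\mathbb{B}_R$ (using $R\geq 1$), all the scalars $\|w_j\|^2$ and $D_{ij}$ depend only on the fixed profiles and not on $R$, so any bound derived from this identity is automatically $R$-independent, which is exactly what is required for \eqref{stimaE2}.

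I would prove the coercivity \eqref{unboundedBelow2} first; then \eqref{stimaE2} follows at once, because a continuous function going to $-\infty$ at infinity on a finite-dimensional space attains a finite maximum that depends only on the fixed $(w_j)_j$. For \eqref{unboundedBelow2} I would combine three ingredients: each single-variable term $t_j^2/2 - |t_j|^{q+1}/(q+1)$ is bounded above (maximum at $|t_j|=1$); Lemma \ref{preliminariSuPhi}(i) gives $\sum_{i,j} t_i^2 t_j^2 D_{ij} = \iint w^2 w^2/|x-y|\,dx\,dy \leq C\|w\|^4 \leq CM_k^2(\sum_j t_j^2)^2$; and $\sum_j |t_j|^{q+1}\|w_j\|^2 \geq m_k\sum_j |t_j|^{q+1}$. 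Up to a bounded remainder, the large-$T$ behavior of $E(w_T)$ is thus governed by $\frac{CM_k^2}{4}\bigl(\sum_j t_j^2\bigr)^2 - \frac{m_k}{q+1}\sum_j |t_j|^{q+1}$.

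When $q>3$, H\"older gives $\sum_j |t_j|^{q+1}\geq k^{-(q-1)/2}\bigl(\sum_j t_j^2\bigr)^{(q+1)/2}$ with exponent $(q+1)/2>2$, so the negative term is strictly of higher order than the positive quartic and drives $E(w_T)\to-\infty$ as $\sum_j t_j^2\to\infty$. The main obstacle is the borderline case $q=3$: both competing terms are quartic, and the sign of the leading coefficient must be forced by construction. Here I would use the Cauchy-Schwarz bound $k\sum_j t_j^4 \geq \bigl(\sum_j t_j^2\bigr)^2$ to estimate the negative quartic from above by $-\frac{m_k}{4k}\bigl(\sum_j t_j^2\bigr)^2$, so that the net quartic coefficient of $E(w_T)$ is at most $\frac{1}{4}(CM_k^2 - m_k/k)$. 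The condition \eqref{quguale3}, which by rescaling amounts to having chosen the profiles $w_j$ sufficiently concentrated so that all Nehari-normalized sizes $\|w_j\|^2$ are correspondingly small, is precisely what makes this coefficient strictly negative; coercivity \eqref{unboundedBelow2} and hence the uniform upper bound \eqref{stimaE2} then follow.
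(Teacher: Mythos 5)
Your expansion of $E$ over $W_k$ (disjoint supports, the normalization \eqref{Nehari}, the estimate \eqref{HLSsemplice} for the Coulomb term), your treatment of $q>3$ via the exponent comparison $(q+1)/2>2$, and your deduction of \eqref{stimaE2} from coercivity of a continuous function on a finite-dimensional space are all correct and run parallel to the paper's argument (the paper instead exhibits the explicit bound $E(w)\le kM_kG_k$ with $G_k=\max_{s\ge 0} g_k(s)$, a cosmetic difference). One small imprecision: once you bound each term $t_j^2/2-|t_j|^{q+1}/(q+1)$ by its maximum you have spent the entire negative term, so it cannot reappear with full coefficient in your ``governing'' expression; you need to split $\tfrac{1}{q+1}$ into two pieces, one to absorb the quadratic part and one to fight the Coulomb term. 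This is harmless when $q>3$.

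The genuine gap is in the borderline case $q=3$. Your aggregate estimates give, with $T=\sum_j t_j^2$,
$$E(w)\ \le\ \frac{M_k}{2}\,T\ -\ \frac14\Bigl(\frac{m_k}{k}-CM_k^2\Bigr)T^2,$$
so your coercivity requires $CkM_k^2<m_k$. This does \emph{not} follow from \eqref{quguale3}: $M_k<1/k$ only gives $CkM_k^2<CM_k$, and $CM_k\le m_k$ fails whenever the ratio $M_k/m_k$ exceeds $1/C$ --- nothing in the construction of the $w_j$ controls that ratio, and rescaling/concentrating the profiles to make $M_k$ small does not help because $m_k$ may shrink just as fast. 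The paper avoids introducing $m_k$ altogether by keeping the weights attached: Cauchy--Schwarz in the form $\bigl(\sum_j t_j^2\|w_j\|^2\bigr)^2\le k\sum_j t_j^4\|w_j\|^4\le kM_k\sum_j t_j^4\|w_j\|^2$ bounds the Coulomb term by $\frac{kM_k}{4}\sum_j t_j^4\|w_j\|^2$, which pairs term by term with the negative quartic $-\frac14\sum_j t_j^4\|w_j\|^2$ produced by \eqref{Nehari}; the net coefficient of each $t_j^4\|w_j\|^2$ is $-(1-kM_k)/4<0$ exactly by \eqref{quguale3}, and $m_k$ never enters. (The paper silently takes the constant in \eqref{HLSsemplice} equal to $1$ here, so strictly the condition should read $CkM_k<1$; that is a separate, minor issue.) To close your proof you should replace the aggregate bound by this weighted componentwise comparison, or else strengthen the hypothesis to $CkM_k^2<m_k$, which is not what the lemma assumes.
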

\begin{proof}
Let $w\in W_k,$ then $w=\sum_{j=1}^k t_jw_j$ where $t_j\in\mathbb R,$ $j=1,..,k,$ hence using the inequality \eqref{HLSsemplice}
\begin{eqnarray}\label{stima_g_k} E(w)&=&\sum_{j=1}^k \left[\frac{t_j^2}{2}\|w_j\|^2-\frac{|t_j|^{q+1}}{q+1}\|w_j\|_{q+1}^{q+1}\right] +\frac{1}{4}\sum_{i,j=1}^k t_j^2t_i^2\int_{\B}\int_{\B}\frac{w_j^2(x)w_i^2(y)}{|x-y|} dxdy\nonumber\\
&&\leq \sum_{j=1}^k \left[\frac{t_j^2}{2}\| w_j\|^2-\frac{|t_j|^{q+1}}{q+1}\|w_j\|^{2}\right] +\frac{1}{4}\sum_{i,j=1}^k t_j^2t_i^2\| w_j\|^2\| w_i\|^2\qquad (\|w_j\|_{q+1}^{q+1}=\|w_j\|^{2})\nonumber\\
&&\leq \sum_{j=1}^k \left[\frac{t_j^2}{2}\| w_j\|^2-\frac{|t_j|^{q+1}}{q+1}\|w_j\|^{2}\right] +\frac{k}{4}\sum_{j=1}^k t_j^4\| w_j\|^4\nonumber\\
&& = \sum_{j=1}^k \| w_j\|^2\left[\frac{t_j^2}{2} +\frac{k}{4} t_j^4\| w_j\|^2-\frac{|t_j|^{q+1}}{q+1}\right]\nonumber\\
&&\leq  \sum_{j=1}^k \| w_j\|^2\left[\frac{t_j^2}{2} +\frac{kM_k}{4} t_j^4-\frac{|t_j|^{q+1}}{q+1}\right]
\end{eqnarray}
Let now be $q>3$, then putting $0<G_k:=\max_{s\in[0,+\infty)}g_{k}(s),$ where $g_{k}(s):=\left[\frac{s^2}{2} +\frac{kM_k}{4} s^4-\frac{|s|^{q+1}}{q+1}\right],$ from \eqref{stima_g_k} it follows that 
$$E(w)\leq  \sum_{j=1}^k \| w_j\|^2G_k\leq kM_kG_k.$$
Moreover, again from \eqref{stima_g_k} we have that
$$\lim_{\|w\|\rightarrow +\infty}E(w)=-\infty,$$
since
$+\infty\leftarrow\|w\|=\sum_{j=1}^k|t_j|\|w_j\|$
iff  there exists (at least one) $ J\in\{1,..,k\}$ s.t. $|t_{J}|\rightarrow +\infty.$ 
\\
If $q=3$ then $g_{k}(s)=\frac{s^2}{2} -s^4\frac{1-k
M_k}{4}$ where $1-kM_k>0$ by our choice in \eqref{quguale3} and we proced in a similar way.
\end{proof}

\begin{remark}
Each function $w_j$ $j=1,..,k$ has support contained in $\mathbb B_1,$ hence in $\mathbb B_R$ for any $R\geq 1.$ 
Therefore the choice of the space $W_k$ as well as the results in the lemma above are independent on the radius $R$ of the domain of problem \eqref{problemaDiDirichletNellaPalla}.
\end{remark}
$$$$



\section{Proof of Theorem \ref{teoremaEsistenzaNellaPalla} and Theorem \ref{teoParabolico}} \label{sectionDimoTeoremiPalla}
The proof relies on a dynamical method:
we find solutions of the elliptic problem \eqref{problemaDiDirichletNellaPalla} looking for equilibria in the $\omega$-limit sets of trajectories of the autonomus parabolic problem \eqref{parabolicProblem}. 
In order to obtain equilibria with a fixed number of changes of sign we need to select in a proper way the initial condition. 
This is done following an approach first introduced in \cite{WeiWeth} in the contest of symmetric systems of two coupled Schr\"odinger equations.

This method consists in selecting special initial data on the boundary of the domain of attraction of an asymptotically stable equilibrium.

It relies on the crucial monotonicity property for the number of zeros along the flow (Lemma \ref{NumRegioniNonCresceLungoTraiettoria})
and combines the study of the parabolic flow with a topological argument based on the use of the Krasnonelskii genus. 
For completness we will repeat here the main arguments of \cite{WeiWeth} adapted to our scalar case.
\\

Since the energy functional $E$ is strictly decreasing along nonconstant trajectories (see \eqref{stimaDerivE}) and  $0$ is a strict local minimum for it, it follows that 
the constant solution $u\equiv 0$ is asymptotically stable in $X.$

Let $\mathcal A_*$ be its domain of attraction
 $$\mathcal A_*:=\{u\in X: T(u)=+\infty\mbox{ and }\varphi^t\rightarrow 0 \mbox{ in } X\mbox{ as }t\rightarrow +\infty\}.$$
The asymptotic stability of $0$, the semiflow properties of solutions of \eqref{IBVP} and  the continuous dependence of solutions on intial data (Corollary \ref{continuitaDatoIn}) imply that the set $\mathcal A_*$ is a relatively open neighborhood of $0$ in $X.$\\\\
As in \cite{WeiWeth} we denote with $\partial\mathcal A_*$  the relative boundary of the set $\mathcal A^*$ in $X.$
\\\\
Since $\mathcal A_*$ is open and $0$ is asymptotically stable, the continuous dependence of the semiflow $\varphi$ on the initial values implies that $\partial\mathcal A_*$ is positively invariant under $\varphi.$ 
\\
Moreover $E(u)\geq 0$ for every $u\in \mathcal A_*$ since $E$ is decreasing along trajectories, and hence, by continuity, this is true also for every $u\in\partial\mathcal A_*.$
 As a consequence by Corollary \ref{globalExResult} one has that the solution is global for every initial value $u\in \partial\mathcal A_*,$  the $\omega$-limit  set is nonempty and $\omega(u)\subset\partial\mathcal A_*.$
\\
\\
Since the $\omega$-limit consists of radial solutions of the elliptic problem \eqref{problemaDiDirichletNellaPalla},our aim is to select suitable initial conditions $u$ on $\partial\mathcal A_*$ in a way that any element in $\omega(u)$ has $k-1$ changes of sign. Following \cite{WeiWeth} we define therefore
the closed subset of $X$
$$\mathcal A_k:=\{u\in \partial\mathcal A_*:i(u)\leq k-1\}.$$
Lemma \ref{NumRegioniNonCresceLungoTraiettoria} and the positive invariance of $\partial\mathcal A_*$  for the flow $\varphi$ imply that $\mathcal A_k$ is a positively invariant set for the flow $\varphi.$
\\
Using a topological argument similar to the one in \cite{WeiWeth}, we prove hence the existence of a certain $\bar u\in \mathcal A_k\setminus \mathcal A_{k-1}$ such that $\omega(\bar u)\subset \mathcal A_k\setminus \mathcal A_{k-1}, $ for every $k\geq 2.$
\\
\\
On this scope let's observe that the parabolic problem \eqref{IBVP} has an odd nonlinearity hence the semiflow $\varphi^t$ is odd and the sets $\partial\mathcal A_*$ and $\mathcal A_k,$ $k\geq 1$ are symmetric with respect to the origin.
\\
For a closed symmetric subset $B\subset\partial\mathcal A_*$ we denote by $\gamma (B)$ the usual Krasnoselsii genus and we recall some of the properties we will need:
\begin{lemma} Let $A, B\subset\partial\mathcal A_*$ be closed and symmetric.
\begin{itemize}
\item[(i)] If $A\subset B,$ then $\gamma(A)\leq \gamma(B).$
\item[(ii)] If $h:A\rightarrow\partial\mathcal A_* $ is continuous and odd, then $\gamma(A)\leq\gamma(\overline{h(A)}).$ 
\item[(iii)] If $S$ is a bounded symmetric neighborhood of the origin in a $k$-dimensional normed vector space and $v:\partial S\rightarrow \partial \mathcal A_*$ is continuous and odd,  then $\gamma(v(\partial S))\geq k. $
\end{itemize}
\end{lemma}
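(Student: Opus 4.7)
The plan is to verify the three stated properties of the Krasnoselskii genus; all three hold for an arbitrary Banach space once one has checked that the sets involved do not contain the origin, and the argument is standard. Let me first observe that $0\notin\partial\mathcal A_*$: indeed, $E\geq 0$ on $\partial\mathcal A_*$ by continuity from the fact that $E\geq 0$ on $\mathcal A_*$, but more importantly $0$ is an interior point of $\mathcal A_*$ (since $0$ is an asymptotically stable equilibrium and $\mathcal A_*$ is its basin of attraction), so the open set $\mathcal A_*$ and its boundary are disjoint. Hence for every closed symmetric $A\subset\partial\mathcal A_*$, the genus $\gamma(A)$ is well defined as the smallest integer $n\geq 1$ such that there exists a continuous odd map $f:A\to\mathbb R^n\setminus\{0\}$ (with $\gamma(\emptyset)=0$ and $\gamma(A)=+\infty$ if no such $n$ exists).

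For (i), if $\gamma(B)=n<\infty$ and $f:B\to\mathbb R^n\setminus\{0\}$ is continuous and odd, the restriction $f|_A:A\to\mathbb R^n\setminus\{0\}$ is continuous and odd, giving $\gamma(A)\leq n$. For (ii), if $\gamma(\overline{h(A)})=n<\infty$ and $g:\overline{h(A)}\to\mathbb R^n\setminus\{0\}$ is continuous and odd, then $g\circ h:A\to\mathbb R^n\setminus\{0\}$ is continuous and odd (the composition of odd maps is odd, and $h(A)\subset\overline{h(A)}$), so $\gamma(A)\leq n$.

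The substantive statement is (iii), which is the usual Borsuk--Ulam type lower bound and is the only step where the ambient dimension $k$ enters. I would argue by contradiction: assume $\gamma(v(\partial S))\leq k-1$, so that there is a continuous odd map $g:v(\partial S)\to\mathbb R^{k-1}\setminus\{0\}$. Composing with $v$ gives a continuous odd map $g\circ v:\partial S\to\mathbb R^{k-1}\setminus\{0\}$. Since $S$ is a bounded symmetric neighborhood of $0$ in a $k$-dimensional normed vector space, there is an odd homeomorphism between $\partial S$ and the unit sphere $S^{k-1}\subset\mathbb R^k$ (e.g.\ by radial rescaling with respect to the Minkowski functional of $S$, which is even and positively homogeneous). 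Transporting $g\circ v$ through this odd homeomorphism produces a continuous odd map $S^{k-1}\to\mathbb R^{k-1}\setminus\{0\}$, contradicting the Borsuk--Ulam theorem.

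I do not expect any real obstacle here: the lemma is classical and the only item that requires nontrivial input is the Borsuk--Ulam theorem used in (iii). The only small care to be taken is in (ii), where one must remember that $g$ has to be defined on a closed set to apply the definition of the genus, which is precisely why the closure $\overline{h(A)}$ appears in the statement rather than $h(A)$ itself.
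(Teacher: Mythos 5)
The paper does not actually prove this lemma: it is stated as a recollection of standard properties of the Krasnoselskii genus, so there is no argument of the author's to compare yours against, and supplying the classical proof as you do is perfectly reasonable. Your treatment of (i) and (ii) is the standard one and is correct, and your preliminary observation that $0\notin\partial\mathcal A_*$ (because $\mathcal A_*$ is an open neighbourhood of $0$) is exactly what makes the genus well defined on closed symmetric subsets of $\partial\mathcal A_*$; it is worth also noting in (iii) that $v(\partial S)$ is compact, hence closed, so its genus makes sense. The one place where you should be more careful is the reduction to the sphere in (iii): a bounded symmetric neighbourhood $S$ of the origin need not be star-shaped, so radial rescaling by the Minkowski functional does not in general give an odd homeomorphism of $\partial S$ onto the unit sphere --- and in the application of this lemma one takes $S=\mathcal O=\mathcal A_*\cap W_k$, which is not obviously star-shaped. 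The robust form of the step you need is the generalized Borsuk--Ulam (Borsuk antipodal) theorem: for any bounded open symmetric neighbourhood $\Omega$ of $0$ in a $k$-dimensional space, every continuous odd map $\partial\Omega\rightarrow\mathbb R^{k-1}$ must vanish somewhere; this is proved via Brouwer degree (the degree of an odd map on a symmetric neighbourhood is odd, hence nonzero) rather than by transporting the problem to $S^{k-1}$. With that substitution your contradiction argument for (iii) goes through verbatim, so the gap is only in the justification of one auxiliary homeomorphism, not in the overall strategy.
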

$\ $\\
Let $$\mathcal O:=\mathcal A_*\cap W_k,$$
where $W_k$ is the $k$-dimensional subspace of $X$ defined at Section \ref{SPazioFinitoDim} (Lemma \ref{lemmaWminore}
).
$\mathcal O$ is a symmetric, bounded (from \eqref{unboundedBelow2}
) open neighborhood of $0$ on $W_k.$ 

\begin{lemma}\label{lemmaAk}
$\partial \mathcal O\subset \mathcal A_k$
and $\gamma (\partial \mathcal O)=\gamma(\mathcal A_k)=k$
\end{lemma}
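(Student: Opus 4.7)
The plan is to establish the inclusion $\partial \mathcal O \subset \mathcal A_k$ first, which will immediately supply the lower bound on both genera through property (iii) of the genus lemma; then I would obtain the matching upper bounds by exploiting, for $\partial\mathcal O$, that it lives in a $k$-dimensional ambient space, and, for $\mathcal A_k$, a projection onto the first $k$ radial Dirichlet eigenfunctions.

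For the inclusion I would argue in two steps. \textbf{(a)} $\partial \mathcal O \subset \partial \mathcal A_*$: if $u \in \partial \mathcal O$ were an element of $\mathcal A_*$, then, $\mathcal A_*$ being open in $X$, an $X$-neighbourhood of $u$ would lie in $\mathcal A_*$, so a $W_k$-neighbourhood of $u$ would lie in $\mathcal A_* \cap W_k = \mathcal O$, contradicting $u \in \partial \mathcal O$. Since $u$ is a limit of points of $\mathcal O \subset \mathcal A_*$, the only remaining possibility is $u \in \partial \mathcal A_*$. \textbf{(b)} $i(u) \leq k - 1$: since $u = \sum_{j=1}^k t_j w_j$ with the $w_j$ supported in the pairwise disjoint annuli $\mathbb A_{(j-1)/k,\,j/k}$, the radial profile of $u$ consists of at most $k$ lobes of definite sign separated by flat zero regions, so it has at most $k - 1$ sign changes.

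For the genus of $\partial\mathcal O$, the lower bound $\gamma(\partial \mathcal O) \geq k$ follows from property (iii) of the genus lemma applied to the inclusion $\partial \mathcal O \hookrightarrow \partial \mathcal A_*$ (which is continuous and odd and is well defined by step \textbf{(a)}), since $\mathcal O$ is a bounded symmetric neighbourhood of $0$ in the $k$-dimensional space $W_k$ by Lemma \ref{lemmaWminore}. The matching upper bound is immediate because $\partial\mathcal O \subset W_k \setminus \{0\}$ (note $0$ lies in the interior of $\mathcal O$), so the inclusion of $\partial\mathcal O$ into $\mathbb R^k \setminus \{0\}$ is a continuous odd map. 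Combining $\partial \mathcal O \subset \mathcal A_k$ with property (i) then yields $\gamma(\mathcal A_k) \geq k$.

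The main technical step is the upper bound $\gamma(\mathcal A_k) \leq k$. Here I would construct a continuous odd map $F : \mathcal A_k \to \mathbb R^k \setminus \{0\}$. Let $\{e_j\}_{j \geq 1}$ be the $L^2$-orthonormal radial Dirichlet eigenfunctions of $-\Delta + I$ on $\B_R$ with eigenvalues $\lambda_1 < \lambda_2 < \cdots$. The substitution $v(r) = r\,u(r)$ transforms the radial eigenproblem into the regular one-dimensional Sturm--Liouville problem $-v'' + v = \lambda v$ on $(0,R)$ with separated boundary conditions $v(0) = v(R) = 0$, whose eigenfunctions are $v_j(r) = \sin(j\pi r / R)$; hence $e_j$ has exactly $j - 1$ sign changes in $(0, R)$, and, by the classical Sturm--Hurwitz theorem on zeros of tail Fourier sine series, any $u = \sum_{j \geq m} c_j e_j \in L^2(\B_R)$ with $c_m \neq 0$ has at least $m - 1$ sign changes in the radial variable. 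Now set
\[
F(u) := \bigl(\langle u, e_1\rangle_{L^2}, \ldots, \langle u, e_k\rangle_{L^2}\bigr),
\]
which is linear, hence continuous, and odd. If $F(u) = 0$ for some $u \in \mathcal A_k \setminus \{0\}$, the smallest index $m$ with $\langle u, e_m\rangle \neq 0$ must satisfy $m \geq k + 1$, forcing $i(u) \geq k$ and contradicting $u \in \mathcal A_k$. Since $0 \notin \mathcal A_k$ (because $0$ is interior to $\mathcal A_*$), $F$ is nonvanishing on $\mathcal A_k$, giving $\gamma(\mathcal A_k) \leq k$. The main obstacle is precisely this Sturm--Hurwitz ingredient; an alternative Wei--Weth style construction via an odd projection $\mathcal A_k \to W_k$ seems harder to justify, because a general radial function in $\mathcal A_k$ need not interact non-trivially with each of the specific bump functions $w_j$ used to define $W_k$.
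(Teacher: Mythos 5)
Your proof is correct and takes essentially the same route as the paper: the inclusion $\partial\mathcal O\subset\mathcal A_k$ and the lower bound $\gamma(\partial\mathcal O)\geq k$ via property (iii) are exactly the paper's steps, and your projection $F$ onto the first $k$ radial Dirichlet eigenfunctions, combined with the oscillation bound for tail eigenfunction expansions, is precisely the argument of \cite[Lemma 3.3]{WeiWeth} that the paper invokes without detail for $\gamma(\mathcal A_k)\leq k$. The only cosmetic difference is that Wei--Weth justify the key oscillation estimate by running the linear heat flow and using the non-increase of the zero number, whereas you reduce it, via $v=ru$, to the classical Sturm--Hurwitz theorem for tail Fourier sine series; both justifications are valid here since elements of $\mathcal A_k$ lie in $X\hookrightarrow C^1(\bar\B_R)$.
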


\begin{proof}
The inclusion $\partial \mathcal O\subset \mathcal A_k$  is a consequence of the definition of $W_k.$ Let us compute the genus.
From the property (iii) of the genus it follows immediately that $\gamma (\partial \mathcal O)\geq k.$ Moreover, adapting the arguments in the proof of \cite[Lemma 3.3]{WeiWeth}, one can prove that
$\gamma (\mathcal A_k)\leq k.$ The conclusion comes from the monotonicity property (i) of the genus.
\end{proof}
We define also the closed subsets of $\partial \mathcal A_*$
$$\mathcal C^t_{k-1}:=\{u\in\partial\mathcal A_*:\varphi^t(u)\in \mathcal A_{k-1}\}\quad\mbox{ for }t>0.$$ 
\begin{lemma}\label{lemmaC}
 $\mathcal A_{k-1}\subset \mathcal C^t_{k-1}$ and
$\gamma(\mathcal C^t_{k-1})=\gamma (\mathcal A_{k-1})= k-1$ for every $t>0.$
\end{lemma}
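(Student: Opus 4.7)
The plan is to verify the three assertions of the lemma in the natural order: first the set inclusion, then the equality $\gamma(\mathcal{A}_{k-1}) = k-1$, and finally the equality $\gamma(\mathcal{C}^t_{k-1}) = k-1$.

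For the inclusion $\mathcal{A}_{k-1} \subset \mathcal{C}^t_{k-1}$, I would observe that $\mathcal{A}_{k-1}$ is positively invariant under $\varphi$: this combines the positive invariance of $\partial\mathcal{A}_*$ (already stated in the discussion preceding Lemma \ref{lemmaAk}) with Lemma \ref{NumRegioniNonCresceLungoTraiettoria}, since if $u \in \mathcal{A}_{k-1}$ then $\varphi^t(u) \in \partial\mathcal{A}_*$ and $i(\varphi^t(u)) \leq i(u) \leq k-2$. Thus $\varphi^t(u) \in \mathcal{A}_{k-1}$, which is exactly the statement $u \in \mathcal{C}^t_{k-1}$.

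The value $\gamma(\mathcal{A}_{k-1}) = k-1$ would follow directly by applying Lemma \ref{lemmaAk} with $k-1$ in place of $k$, using the $(k-1)$-dimensional subspace $W_{k-1} \subset X$ built exactly as in Section \ref{SPazioFinitoDim}. Note that since we assume $k \geq 2$, we have $k-1 \geq 1$, and the construction of $W_{k-1}$ (with at least one positive radial bump $w_1$ supported in $\mathbb{B}_1$) still fits the general scheme.

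For the computation of $\gamma(\mathcal{C}^t_{k-1})$, I would first check that $\mathcal{C}^t_{k-1}$ is a closed symmetric subset of $\partial\mathcal{A}_*$ not containing the origin: closedness follows because $\varphi^t : X \to X$ is locally Lipschitz continuous (by the local existence theorem recalled at the beginning of Section \ref{sectionParabolic}) and $\mathcal{A}_{k-1}$ is closed; symmetry follows because the nonlinearity $F$ is odd, hence so is the semiflow $\varphi^t$, and $\mathcal{A}_{k-1}$ is symmetric. The lower bound $\gamma(\mathcal{C}^t_{k-1}) \geq \gamma(\mathcal{A}_{k-1}) = k-1$ is then immediate from the inclusion just established together with the monotonicity property (i) of the genus. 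For the matching upper bound, I would apply property (ii) to the map
$$h := \varphi^t\big|_{\mathcal{C}^t_{k-1}} \colon \mathcal{C}^t_{k-1} \longrightarrow \partial\mathcal{A}_*,$$
which is continuous, odd, and by definition of $\mathcal{C}^t_{k-1}$ takes values in the closed set $\mathcal{A}_{k-1}$; this yields
$$\gamma(\mathcal{C}^t_{k-1}) \leq \gamma\bigl(\overline{h(\mathcal{C}^t_{k-1})}\bigr) \leq \gamma(\mathcal{A}_{k-1}) = k-1,$$
and equality follows.

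I do not anticipate a serious obstacle: the argument is essentially bookkeeping built on ingredients that are already in place (the monotonicity lemma for $i$, the positive invariance of $\partial\mathcal{A}_*$, continuity/oddness of $\varphi^t$ on $X$, the closedness of $\mathcal{A}_{k-1}$, and the three standard properties of the Krasnoselskii genus). The only mildly delicate point is making sure that Lemma \ref{lemmaAk} legitimately applies at level $k-1$ in the borderline case $k=2$, which as noted above is handled by explicitly constructing a $1$-dimensional $W_1$ in the same fashion as the higher-dimensional spaces in Section \ref{SPazioFinitoDim}.
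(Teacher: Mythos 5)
Your proof is correct and follows essentially the same route as the paper: the inclusion via positive invariance of $\mathcal A_{k-1}$ (from Lemma \ref{NumRegioniNonCresceLungoTraiettoria} and the invariance of $\partial\mathcal A_*$), the lower bound by monotonicity of the genus, and the upper bound $\gamma(\mathcal C^t_{k-1})\leq k-1$ by applying property (ii) to the continuous odd map $\varphi^t$ whose image closure lies in $\mathcal A_{k-1}$. The paper's proof is just a terser version of the same argument.
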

\begin{proof} The proof  is trivial once we know that $\gamma (\mathcal C^t_{k-1})\leq k-1.$ 

This is a consequence of the property (ii) of the genus, indeed $\gamma (\mathcal C^t_{k-1})\leq \gamma (\overline{\varphi^t(\mathcal C^t_{k-1})})$ since the map $\varphi^t:\mathcal C^t_{k-1}\rightarrow \partial \mathcal A_*$ is continuous and odd, and $\gamma (\overline{\varphi^t(\mathcal C^t_{k-1})})\leq k-1$ because of the inclusion $\overline{\varphi^t(\mathcal C^t_{k-1})}\subset \mathcal A_{k-1}.$
\end{proof}
\begin{proposition}\label{propositionSolution}
There exists $\bar u\in \partial \mathcal O\setminus \mathcal A_{k-1}$ such that $\emptyset\neq\omega(\bar u)\subset\mathcal A_{k}\setminus\mathcal A_{k-1}.$
\end{proposition}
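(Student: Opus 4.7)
The plan is to exploit the genus gap $\gamma(\partial\mathcal{O})=k>k-1=\gamma(\mathcal{C}^t_{k-1})$ to locate $\bar u\in\partial\mathcal{O}$ whose forward orbit never hits $\mathcal{A}_{k-1}$, and then to verify that all accumulation points of that orbit still have exactly $k-1$ sign changes.

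First I would record that $\mathcal{A}_{k-1}$ is positively invariant under $\varphi$: this combines the positive invariance of $\partial\mathcal{A}_*$ with the monotonicity $i(\varphi^t(u))\leq i(u)$ from Lemma \ref{NumRegioniNonCresceLungoTraiettoria}. Define the hitting time
\[
\tau(u):=\inf\{t\geq 0:\varphi^t(u)\in\mathcal{A}_{k-1}\}\in[0,\infty],
\]
so that by positive invariance $\{u\in\partial\mathcal{A}_*:\tau(u)\leq T\}=\mathcal{C}^T_{k-1}$.

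The decisive point is to prove that $\{u\in\partial\mathcal{A}_*:\tau(u)<\infty\}$ is \emph{open} in $\partial\mathcal{A}_*$. If $\tau(u_0)=t_0<\infty$, then $i(\varphi^{t_0}(u_0))\leq k-2$, and for any fixed $t_1>t_0$ the parabolic zero-dropping argument of Chen--Pol\'a\v{c}ik (already invoked in Lemma \ref{NumRegioniNonCresceLungoTraiettoria}) ensures that the classical solution $\varphi^{t_1}(u_0)$ has $\leq k-2$ sign changes, all corresponding to \emph{simple} zeros in the radial variable. By the $C^1$-continuous dependence (Corollary \ref{continuitaDatoIn}), for $v$ in a small $X$-neighborhood $U$ of $u_0$ one has $\varphi^{t_1}(v)\to\varphi^{t_1}(u_0)$ in $Y$. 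A standard implicit function argument at each simple zero, together with a uniform nonvanishing bound on the complement of the zero set, shows that the number of sign changes is locally constant under small $C^1$-perturbations; hence $i(\varphi^{t_1}(v))\leq k-2$ and $\tau(v)\leq t_1$ for $v\in U$.

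Next I argue by contradiction: suppose $\tau<\infty$ on all of $\partial\mathcal{O}$. Since $\partial\mathcal{O}$ is the boundary of a bounded open subset of the finite-dimensional space $W_k$, it is compact. The openness just established yields a finite subcovering by neighborhoods on which $\tau\leq T_i$, and setting $T:=\max_i T_i$ gives $\partial\mathcal{O}\subset\mathcal{C}^T_{k-1}$. By Lemmata \ref{lemmaAk} and \ref{lemmaC} and the monotonicity of the genus,
\[
k=\gamma(\partial\mathcal{O})\leq \gamma(\mathcal{C}^T_{k-1})=k-1,
\]
a contradiction. Hence there exists $\bar u\in\partial\mathcal{O}$ with $\tau(\bar u)=\infty$; combined with $\bar u\in\partial\mathcal{O}\subset\mathcal{A}_k$ this yields $\bar u\in\partial\mathcal{O}\setminus\mathcal{A}_{k-1}$ and $i(\varphi^t(\bar u))=k-1$ for every $t\geq 0$.

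Finally I would verify $\omega(\bar u)\subset\mathcal{A}_k\setminus\mathcal{A}_{k-1}$. Since $E\geq 0$ on $\partial\mathcal{A}_*$ and $\varphi^t(\bar u)\in\partial\mathcal{A}_*$ by positive invariance, Corollary \ref{globalExResult} gives that $\omega(\bar u)$ is a nonempty compact subset of $Y$ consisting of radial solutions of \eqref{problemaDiDirichletNellaPalla}, and positive invariance of $\mathcal{A}_k$ gives $\omega(\bar u)\subset\mathcal{A}_k$. For $v\in\omega(\bar u)$ write $v=\lim_n \varphi^{t_n}(\bar u)$ in $Y$ with $t_n\to\infty$. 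The case $v\equiv 0$ is excluded because $0$ is asymptotically stable in $X$ while $\varphi^{t_n}(\bar u)\in\partial\mathcal{A}_*$ stays away from the open basin $\mathcal{A}_*\ni 0$. As a nontrivial solution of a second-order radial ODE, $v$ has only simple zeros by ODE uniqueness, and the same $C^1$-stability of simple zeros used above forces $i(v)=\lim_n i(\varphi^{t_n}(\bar u))=k-1$, so $v\in\mathcal{A}_k\setminus\mathcal{A}_{k-1}$, as required. The main obstacle is the openness of $\{\tau<\infty\}$: it hinges on coupling the parabolic zero-dropping (simple zeros for positive times) with the $C^1$-continuous dependence of the semiflow on initial data.
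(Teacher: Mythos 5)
Your argument is correct in substance and reaches the right conclusion, but it is organized differently from the paper's proof. The paper does not establish that $\{u\in\partial\mathcal A_*:\tau(u)<\infty\}$ is open; instead, for each $n$ it picks some $u_n\in\partial\mathcal O\setminus\mathcal C^n_{k-1}$ (nonempty by the same genus gap you use via Lemmata \ref{lemmaAk} and \ref{lemmaC}), extracts a convergent subsequence $u_n\to\bar u$ in the compact set $\partial\mathcal O$, and uses Corollary \ref{continuitaDatoIn} together with Lemma \ref{NumRegioniNonCresceLungoTraiettoria} to conclude that $\varphi^t(\bar u)\notin Int_{Y}(Y_{k-1})$ for every $t>0$; the statement $\omega(\bar u)\cap\mathcal A_{k-1}=\emptyset$ then follows by contradiction, using only that a \emph{nontrivial equilibrium} with at most $k-2$ sign changes has simple zeros (ODE uniqueness and Hopf's lemma) and therefore lies in $Int_{Y}(Y_{k-1})$. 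Your route front-loads the work into the openness of the hitting set and closes with a finite subcover, which yields $\tau(\bar u)=+\infty$, i.e. $i(\varphi^t(\bar u))=k-1$ for all $t$, \emph{directly} (the paper recovers this only a posteriori from the $\omega$-limit statement, and it is what Theorem \ref{teoParabolico} ultimately needs). The price of your version is that it relies on the finer part of the Chen--Pol\'a\v{c}ik theory: it is not true that for \emph{any} fixed $t_1>t_0$ all zeros of $\varphi^{t_1}(u_0)$ are simple --- simplicity fails precisely at the (discrete set of) dropping times, and degenerate zeros can split and \emph{increase} the sign-change count under perturbation --- so you must choose $t_1>t_0$ outside that discrete set (which is possible, and then your $C^1$-stability argument and hence the openness claim are sound). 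By contrast, the paper only ever needs simplicity of zeros for solutions of the stationary radial ODE, which is more elementary; with the quantifier on $t_1$ corrected, your proof is a valid and arguably more transparent alternative.
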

\begin{proof}
From Lemma \ref{lemmaAk} and Lemma \ref{lemmaC} $\emptyset\neq\partial \mathcal O\setminus \mathcal C^t_{k-1}\subset \mathcal A_k\setminus\mathcal A_{k-1}$ for every $t>0.$ 
\\
In particular for any positive integer $n$ there exists $u_n\in \partial \mathcal O\setminus \mathcal C^n_{k-1}$ and,
since $\partial\mathcal O$ is compact, we may pass to a subsequence such that $u_n\rightarrow \bar u\in\partial\mathcal O$ as $n\rightarrow \infty.$
\\
Obviously $\omega(\bar u)\subset \mathcal A_k, $ as in \cite{WeiWeth} we now prove that $\omega(\bar u)\subset \mathcal A_k\setminus\mathcal A_{k-1}.$

On this scope we define the sets
$$Y_k:=\{u\in Y:i(u)\leq k-1\}$$
(see  Section \ref{sectionParabolic} for the definition of the space $Y$).  By construction (using the continuity property in Corollary \ref{continuitaDatoIn})
one has that
$\varphi^t(\bar u)\not\in Int_{Y}(Y_{k-1})$ for every $t>0,$ which implies that $\omega(\bar u)\cap Int_{Y}(Y_{k-1})=\emptyset.$ 

On the other hand if we assume  by contradiction that  $\omega(\bar u)\cap \mathcal A_{k-1}\neq \emptyset$ than, since 
$\omega(\bar u)$ consists of radial solutions of \eqref{problemaDiDirichletNellaPalla},   one can asily show (with arguments similar to the ones in \cite[Lemma 3.1]{WeiWeth}) that $\omega(\bar u)\cap Int_{Y}(Y_{k-1})\neq\emptyset,$ reaching a contradiction.  
\end{proof}

$\;$\\
\\
\\
The proof of Theorem \ref{teoremaEsistenzaNellaPalla} follows from Proposition \ref{propositionSolution} taking any $u\in w(\bar u).$
$u$ is a radial solution for the elliptic problem \eqref{problemaDiDirichletNellaPalla} with exactly $k-1$ changes of sign.
Moreover, since the energy is non-increasing along trajectories and using the energy estimate in $W_k,$ (see \eqref{stimaE2}
) it satisfies
 $$E(u)\leq E(\bar u)\leq C_k.$$
%
$$$$

As a byproduct we obtain also the proof of Theorem \ref{teoParabolico} related to the existence of global solutions of the parabolic problem \eqref{IBVP} with the same fixed number of nodal regions along the flow.

Indeed $u_k(t):=\varphi^t(\bar u)$ is a global solution of the parabolic problem and, from the positive invariance of the sets $\mathcal A_k,$ it follows that $u_k(t)\in \mathcal A_k\setminus \mathcal A_{k-1}$ for every $t\geq 0.$

$$$$

\section{Proof of Theorem \ref{teoremaEsistenzaInRN}}\label{sectionTeoInR3}
For fixed $k\geq 2,$
let $R_n\geq 1, n\in \mathbb N$ such that $R_n\rightarrow +\infty$ as $n\rightarrow +\infty,$ let $u_n\in H^1_{0}(\B_n)$ be a radial weak solution of the Dirichlet problem in the ball $\mathbb B_n=\mathbb B_{R_n}$
\begin{equation}\label{DirichletBn}\left\{
\begin{array}{lr}
-\Delta u_n +u_n+ u_n\int \frac{u_n^2(y)}{|x-y|}dy-|u_n|^{q-1}u_n=0\quad\mbox{ in }\mathbb B_n\\
u_n=0\quad\mbox{ on }\partial\mathbb B_n
\end{array}
\right.\end{equation}
with precisely $(k-1)$ changes of sign in the radial variable and which satisfies the energy uniform bound 
$E(u_n)\leq M_k$ (from Theorem \ref{teoremaEsistenzaNellaPalla}).\\\\

\begin{proposition}[Uniform $H^1$-bound]\label{UniformHBound}There exists $D_k>0$ (independent of $n$) such that
$$\|u_n\|\leq D_k\quad\forall n\in\mathbb N.$$
\end{proposition}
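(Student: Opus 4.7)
My plan is to combine the Nehari identity coming from testing the equation against $u_n$ with the energy bound $E(u_n)\le M_k$, and to exploit the fact that for $q\ge 3$ the resulting combination is a positive definite quadratic form in the natural quantities. This explains (and uses) the restriction $q\ge 3$ flagged earlier.

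First I would multiply the equation in \eqref{DirichletBn} by $u_n$ and integrate over $\mathbb B_n$; using $u_n\in H^1_0(\mathbb B_n)$ and Lemma \ref{preliminariSuPhi} to handle the nonlocal term, this yields the Nehari-type identity
\begin{equation*}
\|u_n\|^2 + D(u_n) = \|u_n\|_{q+1}^{q+1}, \qquad D(u_n):=\int\!\!\int\frac{u_n^2(x)u_n^2(y)}{|x-y|}\,dx\,dy.
\end{equation*}
Substituting this into the definition of the energy to eliminate the $L^{q+1}$-term gives
\begin{equation*}
E(u_n)=\Bigl(\tfrac{1}{2}-\tfrac{1}{q+1}\Bigr)\|u_n\|^2+\Bigl(\tfrac{1}{4}-\tfrac{1}{q+1}\Bigr)D(u_n).
\end{equation*}

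Now the hypothesis $q\ge 3$ enters in a decisive way: both coefficients are nonnegative, and the coefficient of $\|u_n\|^2$ satisfies $\tfrac12-\tfrac{1}{q+1}\ge \tfrac14>0$. Since $D(u_n)\ge 0$, we obtain
\begin{equation*}
\Bigl(\tfrac{1}{2}-\tfrac{1}{q+1}\Bigr)\|u_n\|^2 \le E(u_n)\le M_k,
\end{equation*}
whence $\|u_n\|\le D_k:=\bigl(M_k/(\tfrac12-\tfrac{1}{q+1})\bigr)^{1/2}$, and $D_k$ is clearly independent of $n$.

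The proof is essentially a one-step algebraic manipulation, so there is no real obstacle, but two points should be checked. First, the Nehari identity requires that each term in the equation can be paired with $u_n$: the nonlocal term is controlled by \eqref{HLSsemplice}, and the other terms are standard. Second, this argument genuinely breaks down at $q=3$ for the nonlocal piece (its coefficient vanishes) and becomes negative for $q<3$, so the argument would not yield a uniform bound in the range $q\in(2,3)$; this is consistent with the discussion preceding the statement.
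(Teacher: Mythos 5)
Your proof is correct and is essentially the paper's argument: both combine the bound $E(u_n)\le M_k$ with the identity $E'(u_n)(u_n)=0$, the only difference being the multiplier — the paper computes $E(u_n)-\tfrac14 E'(u_n)(u_n)=\tfrac14\|u_n\|^2+\tfrac{q-3}{4(q+1)}\|u_n\|_{q+1}^{q+1}$ (eliminating the nonlocal term), while you subtract $\tfrac{1}{q+1}E'(u_n)(u_n)$ (eliminating the $L^{q+1}$ term), and in both cases $q\ge 3$ makes the leftover term nonnegative. One small correction to your closing remark: the argument does not break down at $q=3$ — the coefficient of $D(u_n)$ merely vanishes there, which is harmless since only nonnegativity is needed; the genuine obstruction is for $q<3$.
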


\begin{proof}
Since
$u_n$ is a solution it satisfies
$$E'(u_n)(u_n)=0,$$ hence, since $q\geq 3$
%
%
$$M_k\geq E(u_n)=E(u_n)-\frac{1}{4}E'(u_n)(u_n)=\frac{1}{4}\|u_n\|^2+\frac{q-3}{4(q+1)}\|u_n\|_{q+1}^{q+1}\geq \frac{1}{4}\|u_n\|^2,$$
namely
$$\|u_n\|\leq 2\sqrt{M_k}.$$
\end{proof}

\begin{lemma}[Regularity of $u_n$ and uniform $C^{2,\alpha}_{loc}$-bound]\label{Regularity}\label{localUniformBound}
$\ $
$u_n\in C^{2,\alpha}(\bar \B_n)$ and it is a classical solution of the Dirichlet problem \eqref{DirichletBn}. In particular  
for any $R>0$  there exist $n_R\in \mathbb N$ and $C_R>0$ such that 
$$ u_n\in C^{2,\alpha}(\bar \B_R)\ \mbox{ and }\  \|u_n\|_{C^{2,\alpha}(\bar \B_R)}\leq C_R\ \mbox{ for all }n\geq n_R . $$
(The constant $C_R$ is independent of $n$ but depends on $R, q, \alpha, D_k$).
\end{lemma}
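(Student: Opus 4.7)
Both parts of the lemma follow from an elliptic bootstrap applied to the equation
$$-\Delta u_n+u_n=f_n,\qquad f_n:=|u_n|^{q-1}u_n-\phi_{u_n}u_n,$$
in $\B_n$, with zero Dirichlet data on the smooth boundary $\partial\B_n$. The classical regularity $u_n\in C^{2,\alpha}(\bar\B_n)$ for each fixed $n$ is obtained by running this bootstrap up to the boundary with standard $L^p$- and Schauder-estimates; the real content is the $n$-uniform interior bound.

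Fix $R>0$ and choose $n_R$ so that $\B_{R+2}\subset\B_n$ for all $n\geq n_R$; henceforth all estimates are interior on $\B_{R+2}$. By Proposition \ref{UniformHBound}, $\|u_n\|\leq D_k$ uniformly, so $u_n$ is uniformly bounded in $L^6$. Extending $u_n$ by zero outside $\B_n$, Lemma \ref{preliminariSuPhi}(i) gives $\|\phi_{u_n}\|_{D^{1,2}(\R^3)}\leq CD_k^2$, hence a uniform $L^6(\R^3)$-bound on $\phi_{u_n}$. Therefore $|u_n|^{q-1}u_n$ is uniformly bounded in $L^{6/q}$ and, by H\"older, $\phi_{u_n}u_n$ is uniformly bounded in $L^3$, so $f_n$ is uniformly bounded in $L^{6/q}(\B_{R+2})$ (note $6/q\in(6/5,2]$ since $q\in[3,5)$).

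Interior $L^p$-elliptic regularity for $-\Delta+1$ then yields a uniform $W^{2,6/q}$-bound on $u_n$ on a slightly smaller ball; Sobolev embedding raises the integrability of $u_n$, $f_n$ is re-estimated, and the procedure iterates on shrinking concentric balls. After finitely many steps (depending only on $q$) the exponent exceeds $3/2$, so $W^{2,p}\hookrightarrow C^{0,\alpha}$ and $u_n$ is uniformly bounded in $L^\infty(\B_{R+1})$. At this point $u_n^2\in L^\infty(\B_{R+1})$ uniformly and, together with the uniform global $L^{3}(\R^3)$-control on $u_n^2$ coming from the $L^6$-bound, standard potential-theoretic estimates for $-\Delta\phi_{u_n}=u_n^2$ deliver a uniform $C^{0,\alpha}(\bar\B_{R+1/2})$-bound on $\phi_{u_n}$. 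Hence $f_n$ is uniformly $C^{0,\alpha}$ on $\bar\B_{R+1/2}$, and interior Schauder estimates give the claimed bound $\|u_n\|_{C^{2,\alpha}(\bar\B_R)}\leq C_R$.

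The main obstacle is to ensure that every constant in the bootstrap is genuinely independent of $n$. Two features make this go through: first, the nonlocal factor $\phi_{u_n}$, although defined via an integral over $\B_n$, is tamed globally by Lemma \ref{preliminariSuPhi}(i), which controls it in $D^{1,2}(\R^3)$ solely in terms of $\|u_n\|$; second, Proposition \ref{UniformHBound} supplies the uniform $H^1$-control at the base of the iteration. Once these two inputs are in place, all remaining estimates are purely interior and the constants depend only on $R,q,\alpha,D_k$.
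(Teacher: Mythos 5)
Your proof is correct and follows essentially the same route as the paper: uniform $H^1$-control from Proposition \ref{UniformHBound}, the $D^{1,2}$-bound on $\phi_{u_n}$ from Lemma \ref{preliminariSuPhi}, an $L^p$-bootstrap with $n$-independent interior constants up to a uniform local $C^{0,\alpha}$-bound, and finally Schauder estimates. The only (harmless) difference is the order of operations: the paper first upgrades $\phi_n$ to $C^{0,\alpha}$ via $W^{2,3}$-estimates and then bootstraps $u_n$, whereas you run the bootstrap on $u_n$ using only the $L^6$-control on $\phi_{u_n}$ and recover the H\"older bound on $\phi_{u_n}$ afterwards; both work.
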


\begin{proof}
Let $$\phi_n(x):=\phi_{u_n}(x)=\int \frac{u_n^2(y)}{|x-y|}dy.$$
{\bf STEP 1} {\it $\phi_n\in C^{0,\alpha}(\mathbb R^3).$ Moreover for any $R>0$ there exists $C=C(\alpha, R, D_k)>0$ such that $$\|\phi_n\|_{C^{0,\alpha}(\bar \B_R)}\leq C\ \forall n\in \mathbb N.$$
}
\\
\\
Since $u_n\in H^1(\mathbb R^3)$, it can be proved that $\phi_n\in D^{1,2}(\mathbb R^3)$ and that the following  bound holds \begin{equation}\label{BoundPhiD}\|\phi_n\|_{D^{1,2}}\leq C\|u_n\|^2\end{equation}
where $C>0$ is independent of $n\in\mathbb N$ (see Lemma \eqref{preliminariSuPhi}).\\
\\\\
Moreover, by the Sobolev embedding $H^1(\mathbb R^3)\hookrightarrow L^6(\mathbb R^3)$ we deduce that $u_n^2\in L^3(\mathbb R^3)$, hence (using for instance \cite[Theorem 9.9]{GilbargTrudinger} in any domain $\Omega$ which contains $\bar \B_n$)
 $\phi_n\in W^{2,3}_{loc}(\mathbb R^3)$ and $-\Delta \phi_n=u_n^2$ a.e. in $\mathbb R^3.$
\\
\\
Therefore from \cite[Theorem 9.11]{GilbargTrudinger}, Sobolev embeddings and \eqref{BoundPhiD} we obtain
\begin{eqnarray*}\|\phi_n\|_{W^{2,3}(\B_R)} &\leq& C(R)\left(\|\phi_n\|_{L^3(\B_{2R})}+\|u_n^2\|_{L^3(\B_{2R})}\right)\\
&& \leq C(R)\left(\|\phi_n\|_{D^{1,2}}+\|u_n\|^2\right)\\
&& \leq C(R)(1+C)\|u_n\|^2.
\end{eqnarray*}
The conclusion follows from the Sobolev embedding $W^{2,3}(\B_R)\hookrightarrow C^{0,\alpha}(\bar \B_R)$ and Proposition \ref{UniformHBound}.\\
\\
{\bf STEP 2} {\it $u_n\in C^{0,\alpha}(\mathbb R^3).$ In particular for any $R>0$ there exists 
$C=C(q, \alpha, R, D_k)>0$ such that 
$$u_n\in C^{0,\alpha}(\bar \B_R)\ \ \mbox{  and } \ \ \|u_n\|_{C^{0,\alpha}(\bar \B_R)}\leq C \quad \forall n 
$$
(here $u_n$ stays for its trivial extension).
}
\\
\\
$u_n$ is a weak solution of the Dirichlet problem
\begin{equation}\label{DirichletConF}\left\{
\begin{array}{lr}
-\Delta u_n =f_n\quad\mbox{ in }\mathbb \B_n\\
u_n=0\quad\mbox{ on }\partial\mathbb \B_n
\end{array}
\right.\end{equation}
where $f_n:=|u_n|^{q-1}u_n-(1+\phi_n)u_n\in L^{2^*/q}(\B_n)$ ($\phi_n\in C^0(\bar \B_n)$ by Step 1).
By $L^p$-regularity it follows that $u_n\in W^{2,2^*/q}(\B_n)$ and so by a classical bootstrap argument $u_n\in W^{2,p}(\B_n)$ for a certain $p>\frac{N}{2};$  by Sobolev embeddings we conclude that $u_n\in C^{0,\alpha}(\bar \B_n).$ Substituting  $u_n$ by its trivial extension (remember that $u_n=0$ on $\partial \B_n$) one obtains that $u_n\in C^{0,\alpha}(\mathbb R^3).$
\\
\\
We prove now the uniform $C^{0,\alpha}_{loc}$-estimate.
We denote by $C$ any constant which doesn't depend on $n\in\mathbb N$ but which may eventually depend on $q, \alpha, R, D_k$ and which may vary from line to line.\\
Let us observe that for each fixed $R>0,$ there exists $n_R\in\mathbb N$ such that $ \B_{2R}\subseteq \B_n$  $\forall n\geq n_R.$
Therefore from Sobolev embeddings ($p>\frac{N}{2}$) we have for $n\geq n_R$
\begin{equation}\label{sobolevC}\|u_n\|_{C^{0,\alpha}(\bar \B_R)}\leq C\|u_n\|_{W^{2,p}(\B_R)};
\end{equation}
moreover from $L^p$-estimates (cfr. \cite[Theorem 9.11]{GilbargTrudinger}) and Step 1 
\begin{eqnarray}\label{srimaregprec}  \|u_n\|_{W^{2,p}(\B_R)}
&\leq & C\left(\|u_n\|_{L^{p}(\B_{2R})}+\|f_n\|_{L^{p}(\B_{2R})}\right)\nonumber\\
&&\leq  C\left(\|u_n\|_{L^{p}(\B_{2R})}+\|u_n\|^q_{L^{qp}(\B_{2R})}+(1+\|\phi_n\|_{C^{0,\alpha}(\bar \B_{2R})})\|u_n\|_{L^{p}(\B_{2R})}\right)\nonumber\\
&&\leq C\left(\|u_n\|^q_{L^{qp}(\B_{2R})}+\|u_n\|_{L^{pq}(\B_{2R})}\right)\quad(pq>p)
\end{eqnarray}
Remember now that $p\geq \frac{2^*}{q}$ is obtained after a finite number $m\geq 0$ of iterations (bootstrap procedure) starting from $\frac{2^*}{q}.$ If $p=\frac{2^*}{q}$ we can conclude directly  from \eqref{sobolevC} and \eqref{srimaregprec}, using Sobolev embeddings and Proposition \ref{UniformHBound}, indeed 
$$\|u_n\|_{C^{0,\alpha}(\bar \B_R)}\leq C\left(\|u_n\|^q+\|u_n\|\right)\leq C\left(D_k^q+D_k\right)=C.$$
Otherwise, if $p>\frac{2^*}{q}$ then it is easy to verify that we reduce to the previous case iterating $m$ times  the estimate \eqref{srimaregprec} together with the Sobolev embeddings
$W^{2,\frac{Ns}{N+2s}}\hookrightarrow L^s,$ for opportune $s>0.$ 
We remark that at each step the constant involved is independent of $n.$\\

The previous argument gives the local estimate only definitely for $n\geq n_R;$
To extend the result also to the first $n_R-1$ elements it is enough to substitute $u_n$ with its trivial extension  for each $n\leq n_R-1$ (in this case $\B_n\subset \B_{2R}$). 
\\
\\
{\bf STEP 3} {\it Conclusion.
}
\\
\\
$u_n$ is a weak solution of the Dirichlet problem \eqref{DirichletConF}, where $f_n\in C^{0,\alpha}(\bar \B_n)$ (from Step 1 and Step 2).
By elliptic regularity we conclude that $u_n\in C^{2,\alpha}(\bar \B_n)$ and it is a classical solution of the Dirichlet problem.

To prove the uniform $C^{2,\alpha}_{loc}$-estimate we fix any $R>0$ and let $n_R\in\mathbb N$ such that $ \B_{2R}\subseteq \B_n$  $\forall n\geq n_R.$ Hence for $n\geq n_R$
$$\|u_n\|_{C^{2,\alpha}(\bar \B_R)}
\leq  C\left(\|u_n\|_{C^{0,\alpha}(\bar \B_{2R})}+\|f_n\|_{C^{0,\alpha}(\bar \B_{2R})}\right)\le C,$$
where we have used the Schauder estimates (see for instance \cite[Theorem 4.6]{GilbargTrudinger}) and the uniform local bounds from Step 1 and Step 2.
\end{proof}

\begin{proposition}\label{convergenzaUN}
There exists $u\in C^2(\mathbb R^3)\cap H^1(\mathbb R^3),$ radial such that
$$u_{n_k}\rightarrow u \mbox{ in }C^2_{loc}(\mathbb R^3).$$
Moreover $u$ is a solution of
\begin{equation}\label{SPSEquation}
-\Delta u +u+u\int \frac{u^2(y)}{|x-y|}dy-|u|^{q-1}u=0\quad\mbox{ in }\mathbb R^3
\end{equation}
\end{proposition}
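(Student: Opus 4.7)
The plan is to obtain $u$ by a compactness argument based on the uniform local Schauder bounds in Lemma \ref{localUniformBound} and the uniform $H^1$ bound in Proposition \ref{UniformHBound}, and then pass to the limit in the equation.

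First, exhaust $\mathbb{R}^3$ by balls $\mathbb{B}_{R_j}$ with $R_j\to\infty$. For each fixed $j$, Lemma \ref{localUniformBound} gives $\|u_n\|_{C^{2,\alpha}(\bar{\mathbb{B}}_{R_j})}\leq C_{R_j}$ for all $n\geq n_{R_j}$, and since the embedding $C^{2,\alpha}(\bar{\mathbb{B}}_{R_j})\hookrightarrow C^2(\bar{\mathbb{B}}_{R_j})$ is compact, Arzel\`a--Ascoli provides a subsequence converging in $C^2(\bar{\mathbb{B}}_{R_j})$. A standard diagonal extraction then yields a subsequence $u_{n_k}$ and a radial $u\in C^2(\mathbb{R}^3)$ with $u_{n_k}\to u$ in $C^2_{\mathrm{loc}}(\mathbb{R}^3)$; radial symmetry of $u$ follows at once from the radial symmetry of each $u_n$ and pointwise convergence.

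Next I would upgrade $u$ to an $H^1$ function. Extending each $u_n$ by zero to $\mathbb{R}^3$ (which does not increase the $H^1$ norm, since $u_n\in H^1_0(\mathbb{B}_n)$), Proposition \ref{UniformHBound} gives $\|u_n\|_{H^1(\mathbb{R}^3)}\leq D_k$. Up to a further subsequence we may assume $u_n\rightharpoonup\tilde u$ in $H^1(\mathbb{R}^3)$; but $u_n\to u$ in $L^2_{\mathrm{loc}}(\mathbb{R}^3)$ by the $C^2_{\mathrm{loc}}$ convergence, so $\tilde u = u$ a.e., and weak lower semicontinuity yields $\|u\|_{H^1(\mathbb{R}^3)}\leq D_k$.

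Finally, pass to the limit pointwise in
\[
-\Delta u_n(x)+u_n(x)+u_n(x)\phi_{u_n}(x)-|u_n(x)|^{q-1}u_n(x)=0.
\]
The terms $-\Delta u_n(x)$, $u_n(x)$, and $|u_n(x)|^{q-1}u_n(x)$ converge to their counterparts in $u$ directly from $C^2_{\mathrm{loc}}$ convergence. For the nonlocal term fix $x\in\mathbb{R}^3$ and, for $R>0$, split
\[
\phi_{u_n}(x)-\phi_u(x)=\int_{|x-y|<R}\frac{u_n^2(y)-u^2(y)}{|x-y|}\,dy+\int_{|x-y|\geq R}\frac{u_n^2(y)-u^2(y)}{|x-y|}\,dy.
\]
The outer-region integral is bounded by $\frac{1}{R}(\|u_n\|_2^2+\|u\|_2^2)\leq\frac{2D_k^2}{R}$ by the uniform $H^1$ bound, hence arbitrarily small for $R$ large; the inner-region integral tends to $0$ by uniform convergence $u_n^2\to u^2$ on $\bar{\mathbb{B}}_R(x)$ (a consequence of $C^2_{\mathrm{loc}}$ convergence) together with the local integrability of $|x-y|^{-1}$. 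Thus $\phi_{u_n}(x)\to\phi_u(x)$ pointwise, and $u$ solves \eqref{SPSEquation}. The main delicate point is precisely this passage to the limit in the nonlocal term, where one must combine the uniform-on-compacts convergence with the uniform $H^1$ tail bound; everything else is routine once the Schauder and $H^1$ estimates are in hand.
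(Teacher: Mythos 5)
Your proposal is correct, and its overall skeleton coincides with the paper's: a diagonal Arzel\`a--Ascoli extraction from the uniform local $C^{2,\alpha}$ bounds of Lemma \ref{localUniformBound} gives the $C^2_{loc}$ limit, the uniform $H^1$ bound of Proposition \ref{UniformHBound} plus weak lower semicontinuity puts $u$ in $H^1(\mathbb R^3)$, and one then passes to the limit pointwise in the equation. The one place where you genuinely diverge is the convergence of the nonlocal term, which is indeed the only delicate step. The paper invokes the compactness property iv) of Lemma \ref{preliminariSuPhi}: since $u_n\rightharpoonup u$ in $H^1(\mathbb R^3)$ with all functions radial, $\phi_{u_n}\to\phi_u$ strongly in $D^{1,2}(\mathbb R^3)$, hence (after a further subsequence, via the embedding into $L^6$) almost everywhere. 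Your argument instead splits $\phi_{u_n}(x)-\phi_u(x)$ into a near region, controlled by uniform convergence of $u_n^2$ on compacta together with the local integrability of $|x-y|^{-1}$, and a far region, controlled by the tail bound $\frac{1}{R}(\|u_n\|_2^2+\|u\|_2^2)$; the resulting $\limsup$ is at most $2D_k^2/R$ for every $R$, hence zero. This is more elementary and self-contained: it yields pointwise convergence at \emph{every} $x$ without extracting a further subsequence, and it does not use radial symmetry (which the paper's Lemma \ref{preliminariSuPhi} iv) relies on, being a compactness statement for radial $H^1$ functions). What the paper's route buys is brevity and a stronger conclusion ($D^{1,2}$ convergence of the potentials), which it reuses later; what yours buys is independence from the radial compactness machinery and a cleaner everywhere-pointwise statement. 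Either way the verification that $u$ solves \eqref{SPSEquation} goes through.
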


\begin{proof}
Since $u_n$ is bounded in $H^1(\mathbb R^3)$ (Proposition \ref{UniformHBound}), one can extract a subsequence of $u_n,$ again denoted by $u_n,$ such that $u_n$ converges weakly in $H^1(\mathbb R^3)$ and almost everywhere in $\mathbb R^3$ to a function $u.$
Observe that $u\in H^1(\mathbb R^3)$ is spherically symmetric.
Moreover, since for any $R>0$ the sequence $u_n$ is definitely bounded in $C^{2,\alpha}(\B_R)$ (Lemma \ref{localUniformBound}), by Arzela's theorem and a standard diagonal process one can also prove that $u\in C^2(\mathbb R^3)$ and that $u_n$ converges to $u$ in $C^2_{loc}(\mathbb R^3).$

In order to prove that $u$ satisfies equation \eqref{SPSEquation} it is enough to pass to the limit for a.e. $x\in\mathbb R^3$ into the equation  pointwise satisfied by $u_n$ definitely (by Lemma \ref{Regularity}) 
$$
-\Delta u_n(x) +u_n(x)+u_n(x)\phi_n(x)-|u_n(x)|^{q-1}u_n(x)=0
$$
To this scope let's observe that from the compactenss result iv) in Lemma \ref{preliminariSuPhi} one can extract a subsequence of $\phi_n,$ again denoted by $\phi_n,$ such that $\phi_n$ converges (strongly in $D^{1,2}(\mathbb R^3)$ and hence, by Sobolev embedding $D^{1,2}(\mathbb R^3)\hookrightarrow L^6(\mathbb R^3)$) almost everywhere in $\mathbb R^3$ to the function $\phi_u=\int \frac{u^2(y)}{|x-y|}dy.$ 
\end{proof}

\begin{lemma}\label{lemmaMaggioreRegolaritaUn}
	 $\phi_n\in C^{2,\alpha}(\mathbb R^3)$ and it is a classical solution of the equation $-\Delta\phi_n=u_n^2$ in $\mathbb R^3.$
	Moreover for any $R>0$  there exists  $D_R>0$ such that 
$$ \phi_n\in C^{2,\alpha}(\bar \B_R)\ \mbox{ and }\  \|\phi_n\|_{C^{2,\alpha}(\bar \B_R)}\leq D_R\ \mbox{ for all }n\in\mathbb N. $$
(The constant $D_R$ is independent of $n$ but depends on $R, q, \alpha, D_k$).
\end{lemma}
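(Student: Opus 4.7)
The plan is to apply interior Schauder estimates to the Poisson equation $-\Delta\phi_n = u_n^2$, once we have enough H\"older regularity on the right-hand side. All the ingredients have been set up in the proof of Lemma \ref{Regularity}, so most of the work is just assembling what is already available.

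First I would note that Step 1 of the proof of Lemma \ref{Regularity} already gives $\phi_n\in W^{2,3}_{loc}(\mathbb R^3)$ together with the identity $-\Delta\phi_n=u_n^2$ a.e.\ in $\mathbb R^3$, and Step 2 of the same proof gives $u_n\in C^{0,\alpha}(\mathbb R^3)$ with the local uniform bound $\|u_n\|_{C^{0,\alpha}(\bar{\B}_R)}\le C$ independent of $n$. Since the product of two H\"older functions is H\"older with the same exponent, $u_n^2\in C^{0,\alpha}(\mathbb R^3)$ and, for each fixed $R>0$,
\[
\|u_n^2\|_{C^{0,\alpha}(\bar{\B}_{2R})}\;\le\;2\,\|u_n\|_{C^0(\bar{\B}_{2R})}\,\|u_n\|_{C^{0,\alpha}(\bar{\B}_{2R})}\;\le\;C_R
\]
with $C_R$ independent of $n$.

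Next, with the right-hand side in $C^{0,\alpha}_{loc}$, I would invoke the interior Schauder estimate (\textit{e.g.}\ \cite[Theorem 4.6]{GilbargTrudinger}) applied to $-\Delta\phi_n=u_n^2$ on any pair of concentric balls $\B_R\subset\B_{2R}$:
\[
\|\phi_n\|_{C^{2,\alpha}(\bar{\B}_R)}\;\le\;C(R)\bigl(\|\phi_n\|_{C^0(\bar{\B}_{2R})}+\|u_n^2\|_{C^{0,\alpha}(\bar{\B}_{2R})}\bigr).
\]
The first term on the right is uniformly bounded in $n$ because Step 1 of Lemma \ref{Regularity} yielded $\|\phi_n\|_{C^{0,\alpha}(\bar{\B}_{2R})}\le C$, and we just bounded the second term uniformly as well. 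This immediately gives the desired uniform $C^{2,\alpha}_{loc}$-estimate. Since this holds for every $R>0$, we also get $\phi_n\in C^{2,\alpha}(\mathbb R^3)$ in the (local) sense defined in Section \ref{Preliminari}. Finally, as $\phi_n\in C^2$ and $u_n^2\in C^0$, the almost-everywhere identity $-\Delta\phi_n=u_n^2$ upgrades to a pointwise one, so $\phi_n$ is a classical solution in all of $\mathbb R^3$.

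There is really no serious obstacle: everything reduces to the H\"older regularity of $u_n$, which was already proved (uniformly in $n$) in Step 2 of Lemma \ref{Regularity}, plus a single application of the interior Schauder estimate. The only point that requires a moment of care is making sure the constants in the Schauder estimate depend only on $R$, $\alpha$ and the uniform bounds already obtained, which is automatic from the standard statement of the estimate.
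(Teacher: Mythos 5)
Your proposal follows essentially the same route as the paper: uniform local $C^{0,\alpha}$ bounds on $u_n^2$ coming from Step 2 of Lemma \ref{Regularity}, then the interior H\"older (Schauder) estimate \cite[Theorem 4.6]{GilbargTrudinger} for $-\Delta\phi_n=u_n^2$ on concentric balls $\B_R\subset\B_{2R}$, with the zeroth-order term controlled by Step 1. There is, however, one point where your order of deductions does not close: \cite[Theorem 4.6]{GilbargTrudinger} is stated for functions that are already $C^2$ classical solutions of Poisson's equation, so you cannot invoke it while only knowing $\phi_n\in W^{2,3}_{loc}$ with the equation holding almost everywhere; yet in your last paragraph you derive $\phi_n\in C^2$ precisely from the conclusion of that theorem, which is circular. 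The paper avoids this by first observing that $u_n^2\in C^{0,\alpha}(\mathbb R^3)$ is bounded and that $\phi_n$ is exactly the Newtonian potential of $u_n^2$, so the regularity result for Newtonian potentials \cite[Lemma 4.2]{GilbargTrudinger} gives $\phi_n\in C^2(\mathbb R^3)$ and $-\Delta\phi_n=u_n^2$ pointwise; only then does it apply \cite[Theorem 4.6]{GilbargTrudinger} to obtain the uniform $C^{2,\alpha}$ bounds. Your argument is repaired by inserting that preliminary step (or by quoting a Schauder-type regularity statement valid for strong $W^{2,p}_{loc}$ solutions), so the gap is minor and purely logical rather than conceptual, but as written the chain of implications is incomplete.
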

\begin{proof} 
%
%
%
One can easily prove that $u_n^2\in C^{0,\alpha}(\mathbb R^3)$ and it is bounded (see Lemma \ref{Regularity}-Step 2). Therefore it follows that $\phi_n\in C^2(\mathbb R^3)$ and satisfies the equation $-\Delta\phi_n=u_n^2$ in $\mathbb R^3$ (using for instance the known regularity result about newtonian potentials in \cite[Lemma 4.2]{GilbargTrudinger} in any domain $\Omega$ which contains $\bar \B_n$).

The conclusion follows from interior H\"older estimates for solutions of the Poisson's equation (for instance \cite[Theorem 4.6]{GilbargTrudinger}), namely $\phi_n\in C^{2,\alpha}(\mathbb R^3)$ and, fixed any $R>0$ one has
$$\|\phi_n\|_{C^{2,\alpha}(\bar \B_R)}\leq C \left(\|\phi_n\|_{C^0(\bar \B_{2R})}+\|u_n^2\|_{C^{0,\alpha}(\bar \B_{2R})}\right)\leq C\quad \forall n$$
where the last inequality follows from Step 1 and Step 2 in Lemma \ref{Regularity}.
\end{proof}

\begin{proposition}\label{phiVerificaLaPoisson}
	 $\phi_u:=\int\frac{u^2(y)}{|x-y|}dy\in C^{2}(\mathbb R^3)\cap D^{1,2}(\mathbb R^3)$ and it is a classical radial solution of the equation $-\Delta\phi_u=u^2$ in $\mathbb R^3.$
\end{proposition}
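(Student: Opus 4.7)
The plan is to combine the uniform local $C^{2,\alpha}$--bounds on $\phi_n$ from Lemma \ref{lemmaMaggioreRegolaritaUn} with the $D^{1,2}$--convergence $\phi_n\to\phi_u$ already available from Lemma \ref{preliminariSuPhi} (iv), and then pass to the limit pointwise in the classical equation $-\Delta\phi_n=u_n^2$.

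First, $\phi_u\in D^{1,2}(\R^3)$ is immediate from Lemma \ref{preliminariSuPhi} (i), since $u\in H^1(\R^3)$. Moreover, since the subsequence $u_n$ constructed in Proposition \ref{convergenzaUN} satisfies $u_n\rightharpoonup u$ in $H^1(\R^3)$ with all $u_n$ radial, Lemma \ref{preliminariSuPhi} (iv) gives $\phi_n\to\phi_u$ strongly in $D^{1,2}(\R^3)$, hence (by the Sobolev embedding $D^{1,2}\hookrightarrow L^6$) a.e.\ in $\R^3$ after passing to a further subsequence.

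Next, to upgrade regularity, I would invoke the uniform bound $\|\phi_n\|_{C^{2,\alpha}(\bar{\B}_R)}\leq D_R$ from Lemma \ref{lemmaMaggioreRegolaritaUn}. By Arzelà--Ascoli applied in each ball $\B_R$ and a standard diagonal extraction, a subsequence (not relabeled) of $\phi_n$ converges in $C^2_{loc}(\R^3)$ to some $\tilde\phi\in C^2(\R^3)$. The a.e.\ convergence obtained above forces $\tilde\phi=\phi_u$ a.e., and continuity of both sides yields $\phi_u=\tilde\phi\in C^2(\R^3)$ everywhere. Radiality of $\phi_u$ is inherited from the radiality of each $\phi_n$ (which itself follows from Lemma \ref{preliminariSuPhi} (iii) since $u_n$ is radial).

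Finally, for each $n$ Lemma \ref{lemmaMaggioreRegolaritaUn} gives $-\Delta\phi_n(x)=u_n^2(x)$ pointwise in $\R^3$. The $C^2_{loc}$--convergence $\phi_n\to\phi_u$ lets us pass to the limit in the left-hand side, and the $C^2_{loc}$--convergence $u_n\to u$ from Proposition \ref{convergenzaUN} (which implies uniform convergence of $u_n^2\to u^2$ on compacts) handles the right-hand side, yielding $-\Delta\phi_u(x)=u^2(x)$ pointwise in $\R^3$, as required. No real obstacle is expected here: the only mildly delicate point is making sure the subsequence extracted for $C^2_{loc}$--compactness is compatible with the one along which $\phi_n\to\phi_u$ a.e., which is handled by successive extractions.
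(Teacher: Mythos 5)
Your proposal is correct and follows essentially the same route as the paper: extract a $C^2_{loc}$--convergent subsequence of $\phi_n$ via the uniform $C^{2,\alpha}_{loc}$ bounds and Arzel\`a--Ascoli, pass to the limit in $-\Delta\phi_n=u_n^2$, identify the limit with $\phi_u$ through the a.e.\ convergence already established, and get $D^{1,2}$ membership and radiality from Lemma \ref{preliminariSuPhi}. The only cosmetic difference is that you use the $C^2_{loc}$--convergence of $u_n$ for the right-hand side where the paper only needs pointwise convergence; both suffice.
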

\begin{proof}
From Lemma \ref{lemmaMaggioreRegolaritaUn}, using Arzela's theorem and a standard diagonal process, one can extract a subsequence of $\phi_n$, again denoted by $\phi_n,$ which converges in $C^2_{loc}(\mathbb R^3$) to a function $w\in C^2(\mathbb R^3).$ Moreover (Proposition \ref{convergenzaUN})  one can extract a subsequence of $u_n,$ again denoted by $u_n,$ which converges pointwise in $\mathbb R^3$ to the function $u.$
Therefore passing to the limit for a.e. $x$ in $\mathbb R^3$ into the equation
$-\Delta\phi_n(x)=u^2_n(x),$ one can prove that
$w$ is a classical solution of the equation $-\Delta w= u^2$ in $\mathbb R^3.$ 

Last it is clear that $w$ coincides with $\phi_u$ since we already know that $\phi_n\rightarrow \phi_u$ a.e. in $\mathbb R^3$ (see the proof of Proposition \ref{convergenzaUN}).

To conclude we observe that $\phi_u\in D^{1,2}(\mathbb R^3)$ and it's radial because $u\in H^1(\mathbb R^3)$ and it's radial (see Lemma \ref{preliminariSuPhi}).
\end{proof}

$$$$
Propositions \ref{convergenzaUN} and \ref{phiVerificaLaPoisson} yield the existence of a radial solution $(u,\phi_u)$ for system \eqref{sistemaSPSInRN}.

Next we prove that $u$ is nontrivial and has exacly $(k-1)$ changes of sign in the radial variable. 
$$$$

\begin{lemma}\label{LemmaMinMaxLocali}
Let $\bar r>0$ be a positive local maximum or a negative local minimum point for $r\mapsto u(r)$ (resp. $r\mapsto u_n(r)$).
Then $$|u(\bar r)|\ \mbox{(resp. $\ |u_n(\bar r)|$) }\ \geq 1$$
\end{lemma}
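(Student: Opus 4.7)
The plan is to use the radial form of the elliptic equation and the first/second derivative test at the extremum point, together with the fact that $\phi_u\geq 0$ (resp.\ $\phi_n\geq 0$) pointwise because it is an integral of a nonnegative kernel against a squared function.

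Consider the case of $u$; the argument for $u_n$ is identical. By Proposition \ref{convergenzaUN} and Proposition \ref{phiVerificaLaPoisson}, $u$ is a classical radial solution of \eqref{SPSEquation}, so in radial coordinates
$$-u''(r)-\frac{2}{r}u'(r)+u(r)+u(r)\phi_u(r)-|u(r)|^{q-1}u(r)=0\qquad\text{for }r>0.$$
Suppose first that $\bar r>0$ is a positive local maximum of $r\mapsto u(r)$, so that $u(\bar r)>0$, $u'(\bar r)=0$ and $u''(\bar r)\leq 0$. Substituting at $r=\bar r$ yields
$$-u''(\bar r)+u(\bar r)\bigl(1+\phi_u(\bar r)-u(\bar r)^{q-1}\bigr)=0,$$
and since $-u''(\bar r)\geq 0$ we get $u(\bar r)\bigl(1+\phi_u(\bar r)-u(\bar r)^{q-1}\bigr)\leq 0$. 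Dividing by $u(\bar r)>0$ and using $\phi_u(\bar r)\geq 0$ gives
$$u(\bar r)^{q-1}\geq 1+\phi_u(\bar r)\geq 1,$$
hence $u(\bar r)\geq 1$.

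If instead $\bar r>0$ is a negative local minimum, then $u(\bar r)<0$, $u'(\bar r)=0$ and $u''(\bar r)\geq 0$, so the same substitution yields $u(\bar r)\bigl(1+\phi_u(\bar r)-|u(\bar r)|^{q-1}\bigr)\geq 0$. Dividing by the negative number $u(\bar r)$ flips the inequality and we obtain again $|u(\bar r)|^{q-1}\geq 1+\phi_u(\bar r)\geq 1$, i.e.\ $|u(\bar r)|\geq 1$. The argument for $u_n$ is word-for-word the same, since $u_n$ is a classical radial solution of \eqref{DirichletBn} by Lemma \ref{Regularity} and $\phi_n\geq 0$. There is essentially no obstacle here: the only ingredients are the pointwise equation, the second-derivative test, and nonnegativity of the Poisson potential, the latter being immediate from its integral representation.
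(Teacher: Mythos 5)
Your argument is correct and is essentially identical to the paper's proof: both write the equation in radial form, observe that at a positive local maximum (resp.\ negative local minimum) the term $-w''(\bar r)-\frac{2}{\bar r}w'(\bar r)$ is nonnegative (resp.\ nonpositive), divide by $w(\bar r)$, and use $\phi_w\geq 0$ to conclude $|w(\bar r)|^{q-1}\geq 1+\phi_w(\bar r)\geq 1$. No differences worth noting.
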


\begin{proof}
Let $w=u$ (resp. $u_n$), so writing the equation \eqref{DirichletBn} (resp. \eqref{SPSEquation}) in polar coordinates:
$$-  w''(\bar r)-\frac{2}{\bar r} w'(\bar r)=w(\bar r)\left[| w(\bar r)|^{q-1}-(1+\phi_{w}(\bar r))\right].$$
If $\bar r>0$ is a local maximum (resp. a local minimum) point for $w$ then
$$w(\bar r)\left[| w(\bar r)|^{q-1}-(1+\phi_{ w}(\bar r))\right]\geq 0\quad \mbox{(resp. $\leq0$)}$$
hence, since
$w(\bar r)>0$ (resp. $w(\bar r)<0$) 
$$| w(\bar r)|^{q-1}\geq(1+\phi_{ w}(\bar r))\geq 1$$
namely the thesis.
\end{proof}

\begin{lemma}\label{LemmaNonScappanoZeriAdInfinito} There exists $R>0$ such that
any positive local maximum or negative local minimum point $r\geq 0$ for the function $r\mapsto u(r)$ or $r\mapsto u_n(r),$ satifies
$$r\leq R$$
($R$ doesn't depend on $u$ nor $n$ but may depend on $k$).
\end{lemma}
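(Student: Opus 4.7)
The plan is to combine the pointwise upper bound from Lemma \ref{LemmaMinMaxLocali} with the well-known decay at infinity of radial $H^{1}$ functions (Strauss' radial lemma), using the uniform $H^{1}$-estimate of Proposition \ref{UniformHBound}.

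First I would recall Strauss' radial lemma in dimension $3$: every radial function $v\in H^{1}(\mathbb R^{3})$ satisfies a pointwise decay estimate of the form
\begin{equation*}
|v(r)|\leq \frac{C}{r}\,\|v\|_{H^{1}(\mathbb R^{3})}\quad\text{for every }r\geq 1,
\end{equation*}
with a dimensional constant $C>0$. I apply this to $u_n$ (extended by $0$ to all of $\mathbb R^{3}$, which preserves the $H^{1}$-norm since $u_n\in H^{1}_{0}(\mathbb B_n)$) and to the limit $u$, which is radial by Proposition \ref{convergenzaUN}. By Proposition \ref{UniformHBound} we have $\|u_n\|\leq D_k$ for every $n$, and by the weak lower semicontinuity of the norm under the convergence $u_{n_k}\rightharpoonup u$ in $H^{1}(\mathbb R^{3})$ obtained in Proposition \ref{convergenzaUN}, we also get $\|u\|\leq D_k$.

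Consequently, choosing $R:=\max\{1,\, 2CD_k\}$ (which depends only on $k$, through $D_k$), we obtain
\begin{equation*}
|u_n(r)|\leq \frac{CD_k}{r}<1\quad\text{and}\quad |u(r)|\leq \frac{CD_k}{r}<1\qquad\text{for every }r>R,
\end{equation*}
uniformly in $n$. On the other hand, Lemma \ref{LemmaMinMaxLocali} tells us that at any positive local maximum or negative local minimum point $\bar r>0$ of $u$ (resp.\ of $u_n$) the value $|u(\bar r)|$ (resp.\ $|u_n(\bar r)|$) must be at least $1$. These two facts are incompatible as soon as $\bar r>R$, so every such $\bar r$ must satisfy $\bar r\leq R$, which is the desired conclusion.

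I do not expect any serious obstacle here: the only points requiring a little care are (i) justifying that the trivial extension of $u_n$ belongs to $H^{1}(\mathbb R^{3})$ with the same norm (immediate from $u_n\in H^{1}_{0}(\mathbb B_n)$), and (ii) transferring the uniform bound $D_k$ from the sequence to the limit $u$, which follows from weak lower semicontinuity. Radial symmetry of $u$ is already provided by Proposition \ref{convergenzaUN}, so Strauss' lemma applies directly.
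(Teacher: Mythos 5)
Your proposal is correct and follows essentially the same route as the paper: Strauss' radial lemma combined with the uniform $H^1$ bound of Proposition \ref{UniformHBound} forces $|u_n|$ and $|u|$ below $1$ outside a fixed ball, which is incompatible with the lower bound $\geq 1$ from Lemma \ref{LemmaMinMaxLocali} at any positive local maximum or negative local minimum. The paper's argument is identical in substance (it uses the threshold $1/2$ instead of $1$, and your explicit remark on passing the bound to the limit $u$ by weak lower semicontinuity is a welcome clarification of a step the paper leaves implicit).
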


\begin{proof} 
By Strauss Lemma\footnote{Strauss Lemma: let $N \geq 2$; every radial function $u\in H^1(R^N)$ is almost
everywhere equal to a function $U(x)$, continuous for $x \neq 0$ and such that
$$|U(x)| \leq  C_N |x|^{(1-N)/2} \|u\|_{H^1(\mathbb R^N)}\ \mbox{ for }\ |x|\geq R_N$$
where $C_N$ and $R_N$ depend only on the dimension $N$} (see \cite{BerLionsI}) and the uniform bound on the $H^1$ norms there exists $R>0$ such that 
\begin{equation}\label{assurdo}|u(x)|,| u_n(x)|<\frac{1}{2},\quad\mbox{ for any } \ x\in\mathbb R^3\setminus \B_R,\quad\mbox{ for any } n\end{equation}
We prove the result for $u_n.$ The proof for $u$ can be done in a similar way.\\
\\
For small $n$ such that $R_n\leq R$ the result is trivial (since $u_n\equiv 0$ in $\mathbb R^3\setminus \B_R$).
Hence let's consider $n$ such that $R_n> R$ and
by contradiction let $r_n>R$ be a maximum (resp. a minimum) for $u_n$ with
$u_n(r_n)>0$ (resp. $u_n(r_n)<0$).
Hence by Lemma \ref{LemmaMinMaxLocali}
$$|u_n(r_n)|\geq 1$$ which contradicts \eqref{assurdo}.
\end{proof}

\begin{lemma}[Properties of $u$]\label{proprietaU} Let $R>0$ be as in Lemma \ref{LemmaNonScappanoZeriAdInfinito}. 
\begin{enumerate}
\item[i)] $u\neq 0$;
\item[ii)]  $u$ changes sign;
\item[iii)]  let $\bar r$ such that $u(\bar r)=0$, then $\frac{\partial u}{\partial r}(\bar r)\neq 0;$
\item[iv)] let $\bar r$ such that $u(\bar r)=0$, then $\bar r\in (0,R)$ and it is isolated.
\item[v)] in every subinterval where $r\mapsto u(r)$ changes sign precisely once, $r\mapsto u_n(r)$ also changes sign precisely once for large $n$.
\end{enumerate}
\end{lemma}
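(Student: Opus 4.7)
The plan is to transfer the nodal structure from the sequence $u_n$ to the limit $u$, using the $C^2_{loc}$-convergence of Proposition \ref{convergenzaUN}, the pointwise extremum bound of Lemma \ref{LemmaMinMaxLocali}, and ODE uniqueness for the one-dimensional radial equation.

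For (i) and (ii), in each of the $k$ nodal annuli of $u_n$ I would pick an extremum $r_n^{(j)}$ that is either a positive local maximum or a negative local minimum, with the signs of $u_n(r_n^{(j)})$ alternating in $j=1,\dots,k$. Lemma \ref{LemmaNonScappanoZeriAdInfinito} forces $r_n^{(j)}\in[0,R]$, so along a subsequence $r_n^{(j)}\to r^{(j)}\in[0,R]$, and Lemma \ref{LemmaMinMaxLocali} gives $|u_n(r_n^{(j)})|\geq 1$. The $C^2_{loc}$-convergence then yields $|u(r^{(j)})|\geq 1>0$, which proves (i), together with $u(r^{(j)})u(r^{(j+1)})<0$, which proves (ii).

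For (iii) and (iv), the idea is to reduce to a regular linear ODE via the substitution $v(r):=r\,u(r)$. Using $\Delta u=(ru)''/r$, the radial form of \eqref{SPSEquation} becomes
\begin{equation*}
-v''(r)+c(r)\,v(r)=0,\qquad c(r):=1+\phi_u(r)-|u(r)|^{q-1},
\end{equation*}
with $c$ continuous on $[0,\infty)$ by Proposition \ref{phiVerificaLaPoisson}. If $u(0)=0$, then $v(0)=0$ and $v'(0)=u(0)=0$, so ODE uniqueness gives $v\equiv 0$, hence $u\equiv 0$, contradicting (i); this forces any zero $\bar r$ to satisfy $\bar r>0$. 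For $\bar r>0$, $u(\bar r)=u'(\bar r)=0$ would again give $v(\bar r)=v'(\bar r)=0$ and $u\equiv 0$, which proves (iii) and yields the isolation of zeros in (iv). For the bound $\bar r<R$ in (iv), I would argue by contradiction: if $u(\bar r)=0$ with $\bar r\geq R$, then by (iii) $u$ crosses zero transversally at $\bar r$; on the side where $u$ becomes positive, the decay $u\to 0$ at infinity combined with the Strauss estimate $|u|<1/2$ outside $\mathbb B_R$ (cf. \eqref{assurdo}) forces a positive local maximum at some $\tilde r>R$, at which Lemma \ref{LemmaMinMaxLocali} gives $|u(\tilde r)|\geq 1$, contradicting \eqref{assurdo}. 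The negative case is symmetric.

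For (v), let $[a,b]$ be a subinterval on which $u$ changes sign exactly once, at some $\bar r\in(a,b)$ with $u'(\bar r)\neq 0$ by (iii). Fix a neighborhood $I\subset(a,b)$ of $\bar r$ on which $u'$ is bounded away from zero, and note $u$ is bounded away from zero on $[a,b]\setminus I$. The $C^2_{loc}$-convergence of Proposition \ref{convergenzaUN} gives $u_n\to u$ in $C^1([a,b])$, so for $n$ large $u_n'$ has constant sign on $I$ (so $u_n$ is strictly monotone on $I$, with at most one zero there), $u_n$ stays nonzero on $[a,b]\setminus I$, and $u_n(a)u_n(b)<0$ forces at least one zero by the intermediate value theorem; hence $u_n$ has exactly one zero in $[a,b]$. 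The main obstacles I expect are two: handling the singular point $r=0$ of the radial Laplacian, which I plan to bypass via the substitution $v=ru$ that converts the equation to a regular linear ODE, and preventing the zeros of $u$ from escaping to $\{r\geq R\}$, which requires carefully combining the decay from Strauss' Lemma with the pointwise extremum bound of Lemma \ref{LemmaMinMaxLocali}.
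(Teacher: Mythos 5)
Your proposal is correct, but for parts (ii) and (iii) it follows a genuinely different route from the paper. For (i)--(ii) the paper argues by contradiction: assuming $u\equiv 0$ it contradicts Lemma \ref{LemmaMinMaxLocali} via the uniform convergence on $\bar\B_R$, and assuming $u\geq 0$ it invokes the strong maximum principle to get $u\geq C>0$ on $\bar\B_R$, contradicting the sign changes of $u_n$; you instead pass the alternating extremum points of $u_n$ (trapped in $[0,R]$ by Lemma \ref{LemmaNonScappanoZeriAdInfinito} and bounded below by $1$ in absolute value by Lemma \ref{LemmaMinMaxLocali}) to the limit, which proves (i) and (ii) in one stroke and avoids the maximum principle altogether. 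For (iii) the paper simply invokes Hopf's boundary lemma, whereas you use the substitution $v=ru$ and uniqueness for the linear ODE $-v''+c(r)v=0$; your version has the advantage of handling $\bar r=0$ cleanly (showing directly that $u(0)\neq 0$, which the Hopf argument does not obviously give) and is exactly the kind of ``ODE technique'' the introduction alludes to. Parts (iv) and (v) coincide with the paper's argument (the paper states (iv) as an immediate consequence of (iii) and Lemma \ref{LemmaNonScappanoZeriAdInfinito}; your explicit contradiction via a spurious extremum outside $\B_R$ is what that reduction amounts to).

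One small point to tidy up: Lemma \ref{LemmaMinMaxLocali} is stated only for extrema at $\bar r>0$, while the extremum of the innermost nodal region of $u_n$ may sit at $r=0$ (e.g.\ if $u_n$ is radially decreasing there), and for $k=2$ your proof of (ii) genuinely needs that region. The fix is immediate --- at a positive local maximum at the origin one has $-\Delta u_n(0)=-3u_n''(0)\geq 0$, so the same computation gives $|u_n(0)|^{q-1}\geq 1+\phi_{u_n}(0)\geq 1$ --- but you should say so rather than cite the lemma outside its stated hypotheses. (The paper sidesteps this by counting only $k-1$ extrema in its proof of (i) and by using the maximum principle for (ii).)
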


\begin{proof}
To prove i) we assume by contradiction that $u\equiv 0.$ 
Then, since $u_n$ converges to $u$ in $C^2_{loc}(\mathbb R^3)$ (Proposition \ref{convergenzaUN}), in particular it follows that 
\begin{equation}\label{assurdoMAx}\max_{x\in\bar{\B_R}}|u_n(x)-u(x)|=\max_{x\in\bar{\B_R}}|u_n(x)|\rightarrow_n 0.\end{equation}
But we know that for each $n\in \mathbb N$ the function $r\mapsto u_n(r)$ changes sign exacly $(k-1)$ times and it is regular, hence it has at least $(k-1)$ positive maximum/negative minimum points and moreover (Lemma \ref{LemmaNonScappanoZeriAdInfinito}) these points are all inside $\B_R.$   
Therefore, from Lemma \ref{LemmaMinMaxLocali} it follows that
$$\max_{x\in\bar{\B_R}}|u_n(x)|\geq 1\quad\forall n\in\mathbb N,$$
which contradicts \eqref{assurdoMAx}.
\\

To prove ii) we assume by contradiction that $u\geq 0.$ Then $u>0$ by the strong maximum principle and hence 
there exists $C>0$ such that $u\geq C$ in $\bar \B_R.$
Since $u_n$ converges to $u$ in $C^2_{loc}(\mathbb R^3)$ (Proposition \ref{convergenzaUN}), 
one has that $u_n\geq \frac{C}{2}>0$ in $\bar \B_R$ definitely.
Which is absurd since $u_n$ changes sign is $\B_R.$
\\

iii) is a direct consequence of the Hopf's boundary Lemma
while iv) follows immediately from iii) and Lemma \ref{LemmaNonScappanoZeriAdInfinito}. 
  \\
 
Last we prove v).
Let $(a,b)$ be an interval where $r\mapsto u(r)$ changes sign precisely once and let $\bar r\in (a,b)$ be the unique point in $(a,b)$ such that $u(\bar r)=0.$ 

We want to prove that $u_n$ changes sign precisely once in $(a,b)$ for large $n.$ 

By Lemma \ref{LemmaNonScappanoZeriAdInfinito} we may restrict w.l.o.g  to the case $(a,b)\subseteq (0,R),$ in particular $(a,b)$ is a bounded interval.
 
Since $u_n$ converges uniformly to $u$ on compact intervals (Proposition \ref{convergenzaUN}), one easily deduces that for $n$ large $u_n$ changes sign at least once in $(a,b).$ 
 
On the other hand from  iii) we know that $\frac{\partial u}{\partial r}(\bar r)\neq 0,$ w.l.o.g. we may assume for instance that $\frac{\partial u}{\partial r}(\bar r)> 0.$ By continuity $\frac{\partial u}{\partial r}(r)\geq \alpha>0$
 in a neighborhood $O_{\bar r} \subseteq (a,b)$ of $\bar r,$
 and from the $C^2_{loc}$-convergence, it follows that for large $n$
$$\frac{\partial u_n}{\partial r}(r)\geq \frac{\alpha}{2}>0\ \ \forall r\in O_{\bar r},$$
namely for large $n$ the function $r\mapsto u_n(r)$ is strictly monotone in $O_{\bar r}.$ 

Moreover by assumption, $|u(r)|\geq\beta>0$ for $r\in(a,b)\setminus O_{\bar r},$ therefore from the $C^2_{loc}$-convergence also 
$|u_n(r)|\geq\frac{\beta}{2}>0$ for $r\in(a,b)\setminus O_{\bar r},$ for large $n.$

Hence we can conclude that for large $n$ the function  $r\mapsto u_n(r)$ changes sign inside $O_{\bar r}$ and exactly once because of the strict monotonicity.
\end{proof}

\begin{proposition}\label{uCambiaSegno} $u$ changes sign precisely $(k-1)$ times in the radial variable.
\end{proposition}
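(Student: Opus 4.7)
The plan is to prove the two inequalities: $u$ changes sign at least $k-1$ times and at most $k-1$ times. The key ingredients are already collected in Lemmas \ref{LemmaMinMaxLocali}, \ref{LemmaNonScappanoZeriAdInfinito} and \ref{proprietaU}, together with the $C^2_{loc}$-convergence from Proposition \ref{convergenzaUN}.

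\textbf{Lower bound ($u$ has at least $k-1$ sign changes).} For each $n$ the radial function $u_n$ has exactly $k-1$ sign changes, hence at least $k$ nodal regions, each containing a positive local maximum or negative local minimum. Call these extrema $0\le r_n^{(1)}<r_n^{(2)}<\cdots<r_n^{(k)}$, labelled so that their signs alternate and, passing to a subsequence, so that the sign pattern is independent of $n$. By Lemma \ref{LemmaNonScappanoZeriAdInfinito} all $r_n^{(i)}\in [0,R]$, so up to a further subsequence $r_n^{(i)}\to r^{(i)}\in [0,R]$. By Lemma \ref{LemmaMinMaxLocali} we have $|u_n(r_n^{(i)})|\ge 1$, and using the uniform convergence $u_n\to u$ on $[0,R]$ together with $r_n^{(i)}\to r^{(i)}$ we get $u_n(r_n^{(i)})\to u(r^{(i)})$ with $|u(r^{(i)})|\ge 1$ and the signs of $u(r^{(i)})$ still alternating with $i$. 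In particular $r^{(i)}\ne r^{(i+1)}$, otherwise $u(r^{(i)})=u(r^{(i+1)})$ but these have opposite signs and moduli $\ge 1$. Hence $0\le r^{(1)}<r^{(2)}<\cdots<r^{(k)}\le R$ with $u(r^{(i)})u(r^{(i+1)})<0$, forcing at least $k-1$ radial zeros of $u$ (one in each interval $(r^{(i)},r^{(i+1)})$).

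\textbf{Upper bound ($u$ has at most $k-1$ sign changes).} Suppose $u$ changes sign $\ell$ times, with zeros $0<s^{(1)}<\cdots<s^{(\ell)}$. By Lemma \ref{proprietaU}(iv) these points are isolated and lie in $(0,R)$, so I can choose pairwise disjoint open intervals $(a_i,b_i)\subset (0,R)$, $i=1,\dots,\ell$, such that $u$ changes sign exactly once in each $(a_i,b_i)$ (namely at $s^{(i)}$). Lemma \ref{proprietaU}(v) then says that for $n$ large enough $u_n$ also changes sign exactly once in each $(a_i,b_i)$. Summing over the disjoint intervals gives at least $\ell$ sign changes for $u_n$; comparing with the assumption that $u_n$ has exactly $k-1$ sign changes yields $\ell\le k-1$.

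\textbf{Main obstacle.} The delicate point is the lower bound: ordinary weak or $C^0_{loc}$-convergence is not enough to guarantee the strict separation $r^{(i)}<r^{(i+1)}$ in the limit, since several extrema of $u_n$ could in principle collapse to a single point. The uniform lower bound $|u_n|\ge 1$ at each extremum, supplied by Lemma \ref{LemmaMinMaxLocali} (which itself crucially uses that $\phi_{u_n}\ge 0$), is precisely what rules this out and preserves the alternation in the limit. Combining the two inequalities concludes that $u$ has exactly $k-1$ sign changes in the radial variable.
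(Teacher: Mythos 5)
Your proof is correct, and it splits into two halves of which one matches the paper and one does not. Your upper bound ($\ell\le k-1$) is exactly the paper's mechanism: surround the isolated interior zeros of $u$ by disjoint intervals and apply part v) of Lemma \ref{proprietaU}. Your lower bound, however, is genuinely different. The paper does not pass extrema to the limit; instead it chooses the intervals around the zeros of $u$ so that they \emph{partition} $(0,R)$, uses v) to conclude that $u_n$ changes sign \emph{exactly} once in each of them and hence exactly $m$ times in $(0,R)$, and then invokes Lemma \ref{LemmaNonScappanoZeriAdInfinito} to rule out sign changes of $u_n$ outside $\B_R$, so that $i(u_n)=m$ and therefore $m=k-1$ in one stroke. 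Your alternative --- tracking one extremum per nodal region of $u_n$, using $|u_n|\ge 1$ there (Lemma \ref{LemmaMinMaxLocali}) and the confinement to $[0,R]$ (Lemma \ref{LemmaNonScappanoZeriAdInfinito}) to extract limits $r^{(1)}<\dots<r^{(k)}$ with $|u(r^{(i)})|\ge 1$ and alternating signs --- is sound, and is in fact the same device the paper uses earlier, but only to prove parts i) and ii) of Lemma \ref{proprietaU}; you push it to the exact count, which makes i) and ii) redundant for you and avoids relying on the ``exactly once'' clause of v) in this direction. The only point to watch is that the innermost nodal region of $u_n$ may have its extremum at $r=0$, where Lemma \ref{LemmaMinMaxLocali} is stated only for $\bar r>0$; the bound $|u_n(0)|\ge 1$ still holds (at an interior maximum of $u_n$ on $\B_{R_n}$ one has $-\Delta u_n(0)\ge 0$ and the same sign computation goes through, as the paper itself implicitly uses in Lemma \ref{LemmaNonScappanoZeriAdInfinito}), so this is cosmetic rather than a gap.
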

\begin{proof} From Lemma \ref{proprietaU} we know that there exists an integer $m\geq 1$ such that the function $r\mapsto u(r)$ changes sign $m$ times. 
In particular there exist $m$ isolated points $0<r_1<r_2<..< r_m<R$ such that $u(r_i)=0$ for any $i=1,..,m$ and  $u(r)\neq 0$ for $r\neq r_i$ $i=1,..,m.$

Let us define the partition $0=x_0<x_1<...<x_{m-1}<x_m=R$ of $(0,R)$ where $x_i=(r_i+r_{i+1})/2,$ $i=1,.., m-1;$ 
and let us consider the subintervals $I_k:=(x_k,x_{k+1})$ for any $k=0,.., m-1.$

By construction the function $r\mapsto u(r)$ changes sign exactly once in each $I_k$ 
and so,
by point v) in Lemma \ref{proprietaU}, it follows that for $n$ large the function $r\mapsto u_n(r)$ changes sign exactly once in each $I_k.$

Therefore in the interval $(0, R)$ the function $r\mapsto u_n(r)$ changes sign exactly $m$ times for $n$ large and by Lemma \ref{LemmaNonScappanoZeriAdInfinito} it changes sign exacly $m$ times at all. As a consequence $m=k-1.$ 
\end{proof}

$\ $\\\\\\\\\\Theorem \ref{teoremaEsistenzaInRN} is a direct consequence of Propositions \ref{convergenzaUN}, \ref{phiVerificaLaPoisson} and \ref{uCambiaSegno}.
$$$$

\section*{Acknowledgments} The author would like to express his sincere gratitude to professor Tobias Weth for bringing to his attention paper \cite{WeiWeth} as well as for his many helpful advices and fruitful discussions during his stay in the Goethe Universit\"at of Frankfurt-am-Main.

\end{document}